\newtheorem{theorem}{Theorem}
\newtheorem{lemma}{Lemma}
\newtheorem{construction}{Construction}
\theoremstyle{definition}
\newtheorem{definition}{Definition}
\newtheorem{claim}{Claim}
\newtheorem{conjecture}{Conjecture}
\newtheorem{problem}{Problem}
\begin{document}
	\title
	{\bf\Large A revisit to Bang-Jensen-Gutin conjecture and Yeo's theorem}
	
\date{}
\author[1,2]{\small Ruonan Li\thanks{Supported by NSFC (No. 11901459,
No. 12071370, No. 12131013). E-mail: rnli@nwpu.edu.cn}}
\author[3]{\small Bo Ning\thanks{Supported by NSFC (No. 11971346). E-mail:~bo.ning@nankai.edu.cn}}
\affil[1]{ School of Mathematics and Statistics,
Northwestern Polytechnical University,
Xi'an, 710129, P.R.~China}
\affil[2]{Xi'an-Budapest Joint Research Center for Combinatorics\\
Northwestern Polytechnical University, Xi'an, 710129, P.R.~China}
\affil[3]{School of Cyber Science, Nankai University, Tianjin, 
	300350, P.R.~China}
\maketitle
\vskip -10pt
\begin{abstract}
A path (cycle) is properly-colored if consecutive edges are of distinct colors. 
In 1997, Bang-Jensen and Gutin conjectured a necessary and sufficient condition 
for the existence of a Hamilton path in an edge-colored complete graph. 
This conjecture, confirmed by Feng, Giesen, Guo, Gutin, Jensen
and Rafley in 2006, was laterly playing an important
role in Lo's asymptotical proof of Bollob\'as-Erd\H{o}s' conjecture
on properly-colored Hamilton cycles. In 1997, Yeo obtained a structural 
characterization of edge-colored graphs that containing no properly 
colored cycles. This result is a fundamental tool in the study of 
edge-colored graphs. In this paper, we first give a much shorter proof of the
Bang-Jensen-Gutin Conjecture by two novel absorbing lemmas. We also prove a new sufficient 
condition for the existence of a properly-colored cycle and 
then deduce Yeo's theorem from this result and a closure concept in edge-colored graphs.
\end{abstract}
\noindent
\section{Introduction}
A graph $G$ is called an {\it edge-colored graph} if each edge $e$ is assigned
a color $col_G(e)$ (not necessarily distinct). Denote by $col(G)$ the 
set of colors assigned on $E(G)$. For a vertex $v$ and a vertex set $S$, 
we use $col_G(v,S)$ to denote the colors assigned to
edges between $v$ and $S-v$ in $G$. In particular, define 
$N_G^c(v)=col_G(v,V(G)\setminus\{v\})$.  The subscript $G$ is sometimes
omitted when there is no ambiguity. An edge-colored graph $G$ is called
{\it $k$-colored} if $|col(G)|=k$ and {\it properly colored} (or {\it PC}) if each pair of
adjacent edges are of distinct colors. For terminology and notations 
not defined here, we refer the reader to \cite{BM:08}.

Hamilton problem is one of the most attractive problem in graph theory,
not only on structural analysis  but also on algorithmic complexity.
For undirected graphs, Hamiltonian property and related properties,
such as traceability and pancyclicity, are widely studied. As a result,
many classical methods are found or well used. For instance,
Pos\'{a}'s rotation-extension method, closure method and Menger's theorem.

For Hamilton problem in digraphs, even refining to multipartite tournaments,
there are many unsolved questions (see the survey paper of Volkmann \cite{V07}).
One could naturally ask what can be said about the Hamiltonian property of a highly
connected multipartite tournament containing a cycle factor. Bang-Jensen, Gutin
and Huang \cite{BGH:96} showed that there is no constant $k$ such that every
$k$-connected multipartite tournament with a cycle factor contains a Hamilton
cycle. However, for Hamilton path, Gutin \cite{Gutin:93} proved that every
multipartite tournament with a {\it directed 1-path-cycle factor}, i.e. a
spanning subgraph which is a union of a directed path and a number of directed cycles,
contains a directed Hamilton path.

Given a multipartite tournament $T_p$ with partite sets $V_1,V_2,\ldots,V_p$,
define an edge-colored complete graph $K$ with $V(K)=V(T_p)$ and for each edge $e=uv$,
if $u,v\in V_i$ or $u\in V_i$, $v\in V_j$ and $uv\in A(T_p)$, then assign color $i$
to $e$. Li et al. \cite{LBZ:20} showed that each PC cycle in $K$ is a directed cycle
in $T_p$ and vice versa. In this sense, the class of edge-colored complete graphs
generalizes the class of multipartite tournaments. Therefore, the relation between
PC cycle factors and PC Hamilton properties was studied. Bang-Jensen, Gutin and
Yeo \cite{BGY:98} proved that a PC cycle factor in an edge-colored $K_n$ guarantees
a PC Hamilton path. Bang-Jensen and Gutin \cite{BG:97} further conjectured that
a {\it PC 1-path-cycle factor}, 
i.e. a PC spanning subgraph which is a union
of a PC path and a number of PC cycles, in an edge-colored $K_n$ can also
guarantee a PC Hamilton path\footnote{A PC Hamilton path  can be regarded as a 
	PC 1-path-cycle factor with the number of PC cycles being $0$.}. This conjecture was confirmed by Feng et al.
\cite{FGGGJR:06} in 2006.

\begin{theorem}[Feng, Giesen, Guo, Gutin, Jensen and 
Rafiey \cite{FGGGJR:06}]\label{Thm:Fengetal}
Let $\{P,C_1,C_2,\ldots,C_k\}$ be a PC 1-path-cycle factor of an
edge-colored complete graph $G$ such that $P$ is a PC path and $C_i$ is a PC
cycle for $1\leq i\leq k$. Then $G$ contains a PC Hamilton path.
\end{theorem}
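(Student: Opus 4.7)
The plan is to induct on the number $k$ of cycles in the 1-path-cycle factor, so that it suffices to prove a single absorption step: given an edge-colored complete graph $G$ together with a PC path $P$ and a vertex-disjoint PC cycle $C$, produce a PC path of $G$ spanning $V(P)\cup V(C)$. Iterating this step absorbs $C_1,\ldots,C_k$ into $P$ one at a time, and the result is a PC Hamilton path on $V(G)$.

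Write $P=v_1v_2\cdots v_m$ and $C=u_1u_2\cdots u_\ell u_1$. The \emph{first absorbing lemma} I would establish is an endpoint-absorption: if $m=1$, or if there exists $u\in V(C)$ with $col(v_m u)\neq col(v_{m-1}v_m)$, then the absorption step succeeds. The proof is short: letting $u^+,u^-$ denote the neighbors of $u$ on $C$, the PC property of $C$ gives $col(uu^+)\neq col(uu^-)$, so at least one of these two colors differs from $col(v_mu)$; traversing $C$ from $u$ in the corresponding direction and concatenating with $P$ yields the desired PC path. The symmetric statement at $v_1$ is analogous.

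It remains to handle the \emph{bad case} in which both endpoint absorptions fail, i.e.\ $col(v_m u)=\alpha:=col(v_{m-1}v_m)$ for every $u\in V(C)$ and $col(v_1 u)=\beta:=col(v_1v_2)$ for every $u\in V(C)$. This is the content of the \emph{second absorbing lemma}, whose statement is that under such a degenerate monochromatic configuration one can insert $C$ at an interior position of $P$. Concretely, one seeks an index $i$ with $2\le i\le m-1$, a vertex $u\in V(C)$, and a direction of traversal, such that replacing the edge $v_iv_{i+1}$ by the path $v_i,u,u^+,\ldots,u^-,v_{i+1}$ satisfies the PC condition at all four junctions. Since the hypothesis gives no information about colors on edges from interior vertices of $P$ to $V(C)$, a counting of forbidden colors at each junction (using again the PC property of $C$ to fix the direction) should yield many admissible insertions. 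The short case $m=2$ is handled separately by constructing a detour $v_1,u,u^\pm,\ldots,u^\mp,v_2$ through $C$, whose existence follows quickly because any two consecutive edges of $C$ at a vertex have different colors.

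The main obstacle will be the second lemma: an adversarial coloring may conspire against any particular insertion site. I would expect to split into subcases according to whether $\alpha=\beta$, whether $\alpha$ or $\beta$ appears on $C$, and how small $\ell$ is (with $\ell=3$ expected to be the tightest). When no direct insertion works, I would fall back on a Pos\'a-type rotation of $P$: using a chord $v_jv_m$ of an appropriate color to swap the tail of $P$ produces a new PC path with a different last vertex, and the monochromaticity assumption on the edges from the old endpoint must break at the new endpoint, returning us to the regime of the first absorbing lemma.
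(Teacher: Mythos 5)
Your top-level structure coincides with the paper's: reduce to absorbing a single cycle $C$ into $P$, and observe that if some $u\in V(C)$ sees an endpoint of $P$ in a color different from the corresponding end color of $P$, then $C$ can be swallowed at that endpoint by choosing the right direction of traversal around $C$. This matches the paper's one-paragraph derivation of Theorem~\ref{Thm:Fengetal}, and that part of your argument is correct.

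The gap is everything after that. The degenerate case you isolate --- $col(v_1,V(C))=\{\beta\}$ and $col(v_m,V(C))=\{\alpha\}$ with $\alpha,\beta$ the end colors of $P$ --- is precisely the statement of Theorem~\ref{Thm:key-1} (Claim~A of Feng et al.), and proving it is the entire technical content of Section~2: two absorption lemmas (Lemmas~\ref{lem:AB} and~\ref{lem:edge}), an induction on $2|C|+|P|$, a reduction to $|C|=3$, and a further page of explicit constructions. Your ``second absorbing lemma'' is not proved, only motivated by a counting heuristic (``should yield many admissible insertions''), and as stated it is too strong: it asks for $C$ to be inserted as a contiguous block across a single interior edge $v_iv_{i+1}$, whereas the Hamilton paths actually produced in the hard subcases (see the displayed paths at the end of the proof of Theorem~\ref{Thm:key-1}, e.g.\ $u_1u_2v_3v_1u_{m-1}Pu_3v_2u_m$) scatter the vertices of $C$ through $P$ and reverse segments of $P$; a contiguous insertion need not exist, which is why the paper needs the replacement/absorption machinery of Lemmas~\ref{lem:AB} and~\ref{lem:edge}. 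The Pos\'a-type fallback does not close the gap either: a rotation via a chord $v_jv_m$ yields a new end vertex $v_{j+1}$, but nothing in the hypotheses constrains the colors of the edges from $v_{j+1}$ to $V(C)$, so there is no reason that ``the monochromaticity assumption must break at the new endpoint.'' In short, you have correctly reduced the theorem to its known hard core and then asserted that core without proof.
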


One important open problem on PC cycles is the Bollob\'as-Erd\H{o}s Conjecture.
Let $K^c_n$ be an edge-colored $K_n$ and $\Delta_{mon}(K^c_n)$ be
the maximum number of edges with the same color which are incident to the same
vertex. Confirming a conjecture proposed by Daykin \cite{D:78}, Bollob\'as
and Erd\H{o}s \cite{BE:76} showed that if
$\Delta_{mon}(K^c_n)\leq \frac{n}{69}$ then $K^c_n$
contains a PC Hamilton cycle. In the same paper, Bollob\'as
and Erd\H{o}s \cite{BE:76} conjectured that the weaker condition
that $\Delta_{mon}(K^c_n)<\lfloor\frac{n}{2}\rfloor$ suffices for the existence
of PC Hamilton cycle. In 2016, Lo \cite{L:16} confirmed the conjecture
asymptotically by proving that for any $\varepsilon>0$, every $K^c_n$
with $\Delta_{mon}(K^c_n)\leq (\frac{1}{2}-\varepsilon)n$ contains a
PC Hamilton cycle, provided $n$ is sufficiently large. One main tool
of Lo's proof is Theorem \ref{Thm:Fengetal}. For problems and results
related to Theorem \ref{Thm:Fengetal}, see \cite{BFMMMPS:19,CKW:20}.

In this article, we shall present a much shorter proof of Bang-Jensen-Gutin
conjecture by proving the following theorem, which is Claim A in Feng et 
al.'s proof \cite{FGGGJR:06}. Note that to extend a PC path, the 
end-vertices, starting and ending colors are important. Hence we 
define the {\it character}
of a PC path  $P=u_1u_2\cdots u_m$ as $ch(P)=[u_1,col(u_1u_2),col(u_{m-1}u_m),u_m].$ We regard $[u_1,col(u_1u_2),col(u_{m-1}u_m),u_m]$ and $[u_m,col(u_{m-1}u_m),col(u_1u_2),u_1]$
as the same object. So each PC path has a unique character.

\begin{theorem}[Claim A in \cite{FGGGJR:06}]\label{Thm:key-1}
Let $\{P,C\}$ be a 1-path-cycle factor of an edge-colored $K^c_n$ 
such that $P$ is a PC path and $C$ is a PC
cycle. If $ch(P)=[u_1,x,y,u_m]$ ($m\geq 2$),
$col(u_1,V(C))=\{x\}$ and $col(u_m,V(C))=\{y\}$. Then $G$ 
contains a PC Hamilton path $Q$ with $ch(Q)=ch(P)$.
\end{theorem}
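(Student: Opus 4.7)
The plan is to proceed by induction on $\ell := |V(C)|$, with the base case (small $\ell$, e.g.\ $\ell = 3$) verified directly by exhausting the possible color patterns at $u_1$ and $u_m$. For the inductive step, the strategy is to absorb a short arc of $C$ (either one or two consecutive vertices) into $P$, reducing $\ell$ while preserving all hypotheses of the theorem.

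Concretely, I would formulate two absorbing lemmas. The first, a \emph{single-vertex absorption}, asserts the existence of $v_a \in V(C)$ and an edge $u_i u_{i+1}$ of $P$ such that inserting $v_a$ between $u_i$ and $u_{i+1}$ gives a PC path $P'$ with $ch(P') = ch(P)$, while the chord $v_{a-1} v_{a+1}$ closes $C - v_a$ into a PC cycle $C'$. The second, a \emph{two-vertex absorption}, inserts the edge $v_a v_{a+1}$ (or its reverse) between some $u_i, u_{i+1}$ on $P$ and uses the chord $v_{a-1} v_{a+2}$ to close $C \setminus \{v_a, v_{a+1}\}$ into a PC cycle. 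In either case $V(C') \subsetneq V(C)$, so the hypotheses $col(u_1, V(C')) = \{x\}$ and $col(u_m, V(C')) = \{y\}$ are automatically inherited. The character is also automatically preserved: if the insertion occurs strictly in the interior of $P$ then $u_1 u_2$ and $u_{m-1} u_m$ are untouched, while if it occurs at the boundary, the new first (resp.\ last) color of $P'$ is forced to be $x$ (resp.\ $y$) by the hypothesis on $col(u_1, V(C))$ (resp.\ $col(u_m, V(C))$). Applying induction then yields the desired PC Hamilton path.

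The main obstacle — and the technical heart of the proof — is to show that at least one of the two absorbing lemmas is always applicable. I would argue by contradiction: assuming every single-vertex and every two-vertex absorption fails, each failed attempt at a pair $(v_a, u_i u_{i+1})$ pins down at least one forbidden color equality, either on an edge of $P$, on an edge of $C$, on the inserting edge $u_i v_a$ or $v_a u_{i+1}$, or on one of the closing chords $v_{a-1} v_{a+1}$ and $v_{a-1} v_{a+2}$. Propagating these equalities around $C$ (using that $C$ is properly colored and that $K_n^c$ is complete, so every chord is colored) and along $P$ should ultimately force either two consecutive edges of $C$ to share a color — contradicting that $C$ is PC — or force some edge incident to $u_1$ or $u_m$ to take a color distinct from $x$ or $y$, contradicting the standing hypotheses. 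A careful double-counting or pigeonhole step, comparing the number of candidate $(v_a, i)$ pairs with the number of color-equality constraints their simultaneous failure would impose, should make this rigorous.
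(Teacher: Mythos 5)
Your overall shape --- absorb a short arc of $C$ into the interior of $P$ while preserving $ch(P)$, then induct --- is the same spirit as the paper's proof, but the proposal has a genuine gap at exactly the point you label ``the technical heart.'' The assertion that one of your two absorbing moves is always applicable is the whole difficulty of the theorem, and it is left as a sketch (``propagating equalities,'' ``double-counting or pigeonhole''). Worse, as formulated your lemmas demand strictly more than can be guaranteed: besides inserting $v_a$ (or $v_av_{a+1}$) into $P$, you require the closing chord $v_{a-1}v_{a+1}$ (resp.\ $v_{a-1}v_{a+2}$) to turn the leftover of $C$ into a PC cycle $C'$. Nothing controls the colors of these chords, and the paper's own Claim~\ref{cl:n-2}(b) shows that in the critical (minimal-counterexample) configurations $G[V(C)\setminus\{v,v'\}]$ has \emph{no} spanning PC cycle --- so the re-closing requirement in your two-vertex lemma is precisely what fails there. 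The paper sidesteps this by inducting on $2|C|+|P|$ and never re-closing the leftover: its Lemma~\ref{lem:edge} produces a PC path covering either \emph{all} of $V(C)\cup V(P)$ or just $V(P)\cup\{v,v'\}$ for one edge $vv'$ of $C$, and in the latter case the remaining vertices of $C$ are handled through the minimality assumption (together with the chord-color constraints of Claims~\ref{cl:2a} and~\ref{cl:3x}), not by exhibiting a PC cycle on them. Without either that weaker absorption statement or a concrete replacement for your applicability argument, the inductive step does not go through.

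A secondary gap is the base case. Exhausting ``the possible color patterns at $u_1$ and $u_m$'' for $\ell=3$ cannot work, since those colors are already fixed to $x$ and $y$ by hypothesis; the actual content of the $n=3$ case is the interaction of $v_1,v_2,v_3$ with $u_2,u_3,u_4,u_{m-2},u_{m-1}$ (the sets $F_+(u_2)$, $F_+(u_3)$, $F_+(u_4)$, $F_-(u_{m-1})$, $F_-(u_{m-2})$ in the paper), and it occupies a substantial explicit case analysis with $m\geq 5$ or $m\geq 6$ forced along the way. So both the inductive step and the base case still need their substance supplied.
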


Another natural problem is to find the sufficient 
sharp color degree condition for the existence
of a PC cycle in an edge-colored graph.
In 1997, Yeo \cite{Y:97} proved a local characterization,
which extended a previous theorem of Grossman
and H\"{a}ggkvist \cite{GH:83} to edge-colored graphs
of arbitrary number of colors by a quite technical method.
\begin{theorem}[Yeo \cite{Y:97}]\label{Y:97}
Let $G$ be an edge-colored graph containing no PC cycle. Then
there exists a vertex $z\in V(G)$ such that no component of $G-z$ is
joint to $z$ with edges of more than one color.
\end{theorem}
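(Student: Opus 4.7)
The plan is to prove the contrapositive: assume that for every vertex $z\in V(G)$ at least one component of $G-z$ is joined to $z$ by edges of at least two distinct colors, and produce a PC cycle. The hypothesis gives, at each vertex $v$, a ``cherry'' consisting of two edges $vu,vw$ of distinct colors $a\neq b$ with $u,w$ lying in a common component of $G-v$, hence connected by a $(u,w)$-path $Q_v$ in $G-v$. The naive concatenation $v,u,Q_v,w,v$ is a closed walk that is locally PC at $v$, but it is not a cycle and it need not be globally PC, so the cherry data alone is not enough.

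The strategy announced in the introduction is to combine this cherry data with the new sufficient condition for a PC cycle (proved elsewhere in the paper) via a closure operation $G\mapsto cl(G)$ on edge-colored graphs. The closure augments $G$ by virtual edges of prescribed colors while preserving PC-cyclability in the strong sense that $cl(G)$ contains a PC cycle if and only if $G$ does. The cherry at every vertex then forces $cl(G)$ to be rich enough, in its color structure at every vertex, to satisfy the hypothesis of the new sufficient condition; invoking that condition yields a PC cycle in $cl(G)$, hence a PC cycle in $G$, the desired contradiction. Verifying that the closure produces a graph matching the hypothesis of the new sufficient condition amounts to checking that each cherry $(vu,Q_v,wv)$ is ``captured'' by a single closure step, so that iterating the closure exhausts all cherries.

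The main obstacle is the non-spoiling property of the closure, namely the direction \emph{PC cycle in $cl(G)$ implies PC cycle in $G$}. The standard unfolding replaces each virtual edge along the discovered PC cycle of $cl(G)$ by the PC substructure in $G$ that witnessed its addition; making this unfolding respect the PC property at the splice points is the delicate part, and it is exactly where the character $ch(P)=[u_1,col(u_1u_2),col(u_{m-1}u_m),u_m]$ of a PC path introduced just before Theorem~\ref{Thm:key-1} plays the essential bookkeeping role, guaranteeing that the first and last colors of adjacent segments disagree after concatenation. Once this non-spoiling property is in place, deducing Yeo's theorem is a short step: the cherry data immediately matches the hypothesis of the new sufficient condition in $cl(G)$, and the PC cycle it produces pulls back to $G$.
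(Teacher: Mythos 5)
There is a genuine gap, and it lies exactly where you flag ``the delicate part.'' The closure the paper actually uses (Definition~\ref{def:1}) is defined in the opposite spirit from yours: an edge $uv$ of a given color is added \emph{only if} doing so creates no PC cycle, so the closure $H$ of a PC-cycle-free $G$ is PC-cycle-free \emph{by construction}, and no ``non-spoiling''/unfolding lemma is ever needed. Your version---add virtual edges that encode PC substructures and then unfold a PC cycle of $cl(G)$ back into $G$---is a different and substantially harder construction, and you leave precisely that unfolding step unproved; the character bookkeeping you invoke is used in the paper only for the Hamilton-path results of Section~2, not here. Without either the paper's definition or a proof of your unfolding claim, the argument does not close.

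The second gap is the claim that the cherries ``immediately match the hypothesis'' of Theorem~\ref{Thm:key-2} in the closure. A cherry at $v$ gives two differently colored edges into one component of $G-v$ together with a connecting path that need not be PC, and you give no mechanism converting this into two \emph{internally vertex-disjoint PC} paths from $v$ to a common endpoint with distinct starting colors. The paper's logic runs the other way: since $H$ has no PC cycle, the hypothesis of Theorem~\ref{Thm:key-2} must \emph{fail} at some vertex $z$ of $H$; if $z$ were non-adjacent to some $u$ in $H$, then by maximality of the closure every coloring of $zu$ would create a PC cycle, yielding two PC paths from $z$ to $u$ with distinct starting colors, which after truncation at their first common vertex become internally disjoint---contradicting the failure at $z$. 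Hence $z$ is adjacent to all of $H$, every PC path from $z$ to $w$ starts with color $col_H(zw)$, so no edge joins $N^i_z$ to $N^j_z$ for $i\neq j$, and the component structure follows for $G\subseteq H$. I would suggest replacing your contrapositive-plus-cherries framing with this ``locate $z$ where the hypothesis fails'' argument, which is both shorter and avoids the two unproved steps above.
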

With aid of Yeo's theorem, Fujita, the first author, and Zhang \cite{FLZ:18}
proved that for an edge-colored graph $G$ with 
$\min_{v\in V(G)}|N^c_G(v)|\geq D$, if $D!\sum_{i=0}^D\frac{1}{i!}>n$ then $G$ contains a PC cycle.
Furthermore, the bound is best possible.
Besides, Yeo's theorem is frequently used not only on the existence of PC
cycles (see the survey paper \cite{BG:97} and recent
results \cite{COY:18,FLZ:18,GJSWY:17,LBYY:21}) but also on other
topics, such as PC trees \cite{BFMMMPS:19},
decomposition of edge-colored graphs \cite{FLW:19},
Ramsey numbers \cite{ORR:19} and matchings \cite{WYZ:16}.

We present a new sufficient condition
for the existence of a PC cycle in an edge-colored graph.

\begin{theorem}\label{Thm:key-2}
Let $G$ be an edge-colored graph. If for each vertex $v$, there exists a
vertex $v'$ and two internally vertex-disjoint PC paths $P$ and $Q$ 
such that $ch(P)=[v,\alpha_1,\beta_1,v']$, $ch(Q)=[v,\alpha_2,\beta_2,v']$ 
and $\alpha_1\neq \alpha_2$, then $G$ contains a PC cycle.
\end{theorem}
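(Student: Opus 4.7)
My plan is a proof by contradiction that combines a ``chord-rerouting'' lemma for a single theta with an iterative enlargement of a union of thetas.

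Assume $G$ contains no PC cycle. For any quadruple $(v,v',P,Q)$ satisfying the hypothesis (call it a \emph{theta}), concatenating $P$ with the reverse of $Q$ gives a closed walk that is PC at every vertex except possibly $v'$: it is PC at $v$ because $\alpha_1\ne\alpha_2$, and internally because $P,Q$ are PC. Hence $\beta_1=\beta_2$, for otherwise we would already have a PC cycle; denote the common color by $\beta$.

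I would next establish a key lemma: if $(v,v',P,Q)$ is a theta with common ending color $\beta$ and $R$ is a PC path from $v'$ to some $z\in V(P)\cup V(Q)$ whose first edge at $v'$ has color $\ne\beta$ and whose internal vertices lie outside $V(P)\cup V(Q)\setminus\{v'\}$, then $G$ contains a PC cycle. The proof is a short case analysis on the location of $z$. If $z=v$, reverse $R$ and pair it with whichever of $P$ or $Q$ has a starting color different from that of $R^{-1}$ at $v$; the resulting theta from $v$ to $v'$ has distinct ending colors at $v'$, yielding a PC cycle. If $z=p_i$ is internal on $P$ (the case $z\in V(Q)$ is symmetric), consider two candidate constructions at $z$: the cycle $v'\to R\to z\to P[z,v']\to v'$, and the theta $(P[v,z]\cdot R^{-1},\,Q)$ from $v$ to $v'$ which has distinct ending colors at $v'$. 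Because $P$ is PC at $z$, the two colors $\mathrm{col}(p_{i-1}p_i)$ and $\mathrm{col}(p_ip_{i+1})$ are distinct, and the end color of $R$ at $z$ can match at most one of them; so at least one candidate is PC at $z$ and produces a PC cycle.

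To apply the lemma, I would iteratively grow a union of thetas. Starting with $T_0=(v_0,v_1,P_0,Q_0)$ given by the hypothesis at some chosen $v_0$, at step $i$ I apply the hypothesis at the current endpoint $v_{i+1}$ to obtain $T_{i+1}$ with starting colors $\gamma_1\ne\gamma_2$, and select the path $R_{i+1}$ of $T_{i+1}$ whose starting color at $v_{i+1}$ differs from the incoming color $\beta_i$. Walking along $R_{i+1}$: if it re-enters the accumulated structure $\mathcal S_i$ at some first vertex $z$, a generalization of the key lemma --- closing the cycle via the sub-path of $R_{i+1}$ up to $z$ together with a return route through the chain of thetas built so far, with a WLOG starting-color choice at each intermediate pole ensuring PC switches, and the two-candidate argument at $z$ resolving the switch there --- produces a PC cycle and the desired contradiction. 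Otherwise $R_{i+1}$ terminates at a new vertex $v_{i+2}\notin\mathcal S_i$, and the construction continues with $\mathcal S_{i+1}=\mathcal S_i\cup V(T_{i+1})$; finiteness of $G$ forces eventual re-entry.

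The principal obstacle I foresee is the subcase where the preferred path $R_{i+1}$ has no re-entry into $\mathcal S_i$, while the sibling path of $T_{i+1}$ does re-enter but has starting color $=\beta_i$, blocking a PC switch at $v_{i+1}$. I expect to handle this by simply continuing to extend the chain along the preferred path and re-applying the hypothesis at the new endpoint; since the hypothesis applies at every newly-introduced vertex and $\mathcal S_i$ strictly enlarges with each step, finiteness eventually forces the preferred path at some stage to have a re-entry (at the latest when $\mathcal S_i=V(G)$), at which point the generalized key lemma applies and yields the contradiction.
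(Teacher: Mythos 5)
Your plan follows, in different clothing, the same strategy as the paper's proof: both arguments hinge on (i) the observation that the two paths of a theta must share their ending colour, (ii) choosing the outgoing path at each pole to have starting colour different from the incoming ending colour, and (iii) resolving the colour clash at the re-entry vertex $z$ by the two possible ways around the theta cycle containing $z$ (your two ``candidates''). The paper merely organises the chain differently: it builds an auxiliary digraph whose arcs are the thetas, takes a directed cycle there, and localises the re-entry by a minimality choice over pairs of indices, whereas you grow an open chain greedily and stop at the first re-entry. Your single-theta key lemma is correct.

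The gap is in the ``generalization of the key lemma'', which you assert in one sentence but which hides a case your two-candidate argument does not cover. The theta cycle $C_j=P_j\cup Q_j$ is properly coloured at every vertex \emph{except} its far pole $v_{j+1}$, where both incident edges carry the common ending colour $c_j$. If the re-entry vertex $z$ of $R_{i+1}$ happens to be such a far pole $v_{j+1}$ with $j<i$, then the two edges of $C_j$ at $z$ have the \emph{same} colour, so neither direction around $C_j$ need give a proper switch at $z$; you must instead treat $z$ as the near pole of $T_{j+1}$ and use the two distinctly-coloured starting edges of $T_{j+1}$ as your candidates (after which both routes still reach $v_{j+2}$ with colour $c_{j+1}$, so the rest of the chain goes through). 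This is exactly the role played in the paper by excluding $v_{i+1}$ in the intersection condition $(V(C_i)\setminus\{v_{i+1}\})\cap V(P_j)\neq\emptyset$. A second point you should make explicit is why the closed walk you build is a simple cycle: the sibling path of $T_t$ may wander through the earlier structure $\mathcal{S}_{t-1}$, so you need to verify (it does follow from your ``first re-entry'' conditions, but only after you say so) that the segment of $C_j$ from $z$ to $v_{j+1}$ and the paths $R_{j+1},\dots,R_i,R_{i+1}[v_{i+1},z]$ are pairwise internally disjoint. With these two repairs your argument goes through.
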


By introducing a new
concept of closure for properly-colored cycles in edge-colored
graphs, we will give a more intuitive and slightly shorter proof of
Theorem \ref{Y:97} from Theorem \ref{Thm:key-2}.

\section{Proofs of Theorems \ref{Thm:Fengetal} and \ref{Thm:key-1}}
In this section, $P$ and $C$ always represent a path and a cycle,
respectively. For an edge $vv'\in E(C)$, we use $vCv'$ to denote the
unique path on $C-vv'$ from $v$ to $v'$. For two vertices
$v,v'\in V(P)$, let $vPv'$ be the unique path on $P$ from $v$ to $v'$.
For a vertex $v\not\in V(P)$, if $col(vu_{r-1})=col(u_{r-1}u_r)$
and $col(vu_{r+1})=col(u_{r}u_{r+1})$ for some $r\in [2,m-1]$,
then we say $v$ {\it replaces} $u_r$ on $P$. Let $t(v)$ be
the smallest integer such that $col(vu_{t(v)})\neq col(u_{t(v)}u_{t(v)+1})$
and let $s(v)$ be the largest integer such that
$col(vu_{s(v)})\neq col(u_{s(v)}u_{s(v)-1})$. For a path
$W=w_1w_2\cdots w_k$ $(k\geq 1)$ with $V(W)\cap V(P)=\emptyset$,
if $u_1Pu_iw_1Ww_ku_{i+1}Pu_{m}$ is a PC path for some
$i\in [1,m-1]$, then we say $P$ {\it absorbs $W$ in the
form }$[u_i,w_1,w_k,u_{i+1}]$.	

Our short proof of Theorem \ref{Thm:Fengetal} mainly relies on the following two new technical lemmas.

\begin{lemma}\label{lem:AB}
Let $\{P,C\}$ be a  PC 1-path-cycle factor of an edge-colored 
graph $G$ such that $P=u_1u_2u_3$. If there exists
$vv'\in E(C)$ such that $v$ can replace $u_2$ on
$P$ and $u_2v'\in E(G)$, then $P$ absorbs $vCv'$
or $vv'$ in the form $[u_1,v,v',u_2]$ or
$[u_2,v',v,u_3]$. Denote by $Q$ the obtained PC path.
Then $ch(Q)=ch(P)$.		
\end{lemma}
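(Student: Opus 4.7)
The plan is a short case analysis driven by whether $col(vv')$ and $col(u_2v')$ agree. Set $x := col(u_1u_2) = col(u_1v)$, $y := col(u_2u_3) = col(u_3v)$, $c := col(vv')$ and $d := col(u_2v')$; note $x \neq y$ by the properness of $P$. Let $vv_1$ and $v'v_\ell$ be the edges of $C$ incident with $v$ and $v'$ other than $vv'$, and write $c_1 := col(vv_1)$ and $c_\ell := col(v'v_\ell)$; properness of $C$ forces $c \neq c_1$ and $c \neq c_\ell$. In all four candidate absorptions considered below, the resulting path has endpoints $u_1, u_3$ and uses initial/terminal edges of colors $x$ and $y$, so $ch(Q) = ch(P)$ is automatic once $Q$ is shown to be PC.

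In the case $c \neq d$, I would try the \emph{short} insertions that use only the edge $vv'$: $Q_1 = u_1 v v' u_2 u_3$ in the form $[u_1,v,v',u_2]$, and $Q_2 = u_1 u_2 v' v u_3$ in the form $[u_2,v',v,u_3]$. Each is PC precisely when a pair of inequalities holds: $Q_1$ needs $x \neq c$ and $d \neq y$, while $Q_2$ needs $x \neq d$ and $c \neq y$ (the third required inequality $c \neq d$ is the case hypothesis). A two-variable pigeonhole using $x \neq y$ and $c \neq d$ shows that no pairing of failing conjuncts from the two candidates is consistent, so at least one of $Q_1, Q_2$ is PC.

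In the case $c = d$ the short insertions are both blocked at the $v'u_2$ junction, so I would instead absorb the entire $vCv'$: take $Q_1 = u_1\,(vCv')\,u_2u_3$ and $Q_2 = u_1u_2\,(v'Cv)\,u_3$. The $v'$-junction in $Q_1$ asks for $c_\ell \neq d = c$ and the $v$-junction in $Q_2$ asks for $c_1 \neq d = c$, both of which hold for free from the properness of $C$. Hence $Q_1$ is PC iff $x \neq c_1$ and $c \neq y$, and $Q_2$ is PC iff $x \neq c$ and $c_1 \neq y$. A simultaneous failure of both would, in each pairing of failing inequalities, collapse to either $x = y$ (contradicting $P$ being PC) or $c = c_1$ (contradicting $C$ being PC).

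The main point requiring care is matching the four candidate insertions to the correct subcase and spotting that properness of $C$ automatically discharges one of the three junction conditions on the long-insertion side. With that bookkeeping in place the whole argument reduces to a two-variable pigeonhole, and character preservation is built into the endpoint-and-color pattern common to all four constructions.
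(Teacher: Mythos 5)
Your proof is correct and follows essentially the same route as the paper's: a finite case analysis exhibiting one of the same four candidate insertions, though your dichotomy $c=d$ versus $c\neq d$ (short insertions when $c\neq d$, long ones when $c=d$, each settled by a two-way pigeonhole using $x\neq y$) is organized somewhat more cleanly than the paper's split on whether $col(u_2v')\in\{x,y\}$. One small slip: in the $c=d$ case the junction of $Q_2$ discharged by properness of $C$ is the $v'$-junction (requiring $c_\ell\neq d$), not the $v$-junction, but the conditions you then state for $Q_1$ and $Q_2$ to be PC are the correct ones, so the argument goes through.
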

\begin{proof}
Let $[v,a,b,v']$ be the character of the PC path $vCv'$. Let $col(vv')=c$,
$col(vu_1)=col(u_1u_{2})=RED$ and $col(vu_{3})=col(u_{2}u_{3})=BLUE$. Consider the
color of $u_{2}v'$.
		
If $col(u_{2}v')\not\in \{RED,BLUE\}$, then by the symmetry between RED
and BLUE, assume that $a\neq RED$. Define (see Figure \ref{fig:Lemma1}(a)-(c))
       \[Q=
		\begin{cases}
			u_1vCv'u_{2}u_3,&\text{if  $col(u_{2}v')\neq b$;}\cr
			u_1vv'u_{2}u_3,&\text{if $col(u_{2}v')=b$ and $c\neq RED$;}\cr
			u_1u_{2}v'vu_3,&\text{if $col(u_{2}v')=b$ and $c\neq BLUE$.}\cr
		\end{cases}
		\]	
		
If $col(u_{2}v')\in \{RED,BLUE\}$, then again by the symmetry 
between RED and BLUE, assume that $col(u_{2}v')=RED$. Define	
(see Figure \ref{fig:Lemma1}(d) and (e))
	\[Q=
	\begin{cases}
		u_1vCv'u_{2}u_3,&\text{if $c=RED$;}\cr
		u_1vv'u_{2}u_3,&\text{if $c\neq RED$.}\cr
	\end{cases}
	\]
	Then $Q$ is always a desired PC path.
   \begin{figure}[t]
		\centering
		\subfigure[$col(u_{2}v')\neq b$]{
			\includegraphics[width=0.145\textwidth]{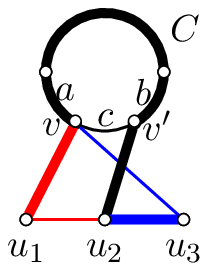}}
		\hskip 20pt
		\subfigure[$col(u_{2}v')=b$ and $c\neq RED$]{
			\includegraphics[width=0.145\textwidth]{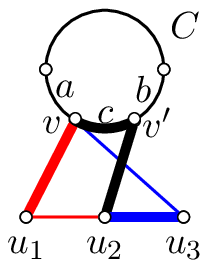}}
		\hskip 20pt
		\subfigure[$col(u_{2}v')=b$ and $c\neq BLUE$]{
		\includegraphics[width=0.145\textwidth]{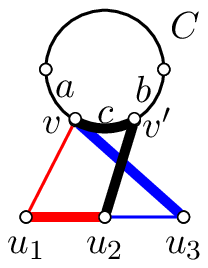}}
		\hskip 20pt
		\subfigure[$col(u_{2}v')=RED$ and $c=RED$]{
		\includegraphics[width=0.145\textwidth]{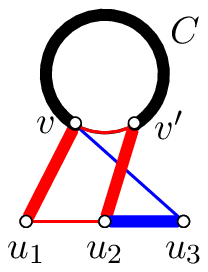}}
		\hskip 20pt
		\subfigure[$col(u_{2}v')=RED$ and $c\neq RED$]{
			\includegraphics[width=0.145\textwidth]{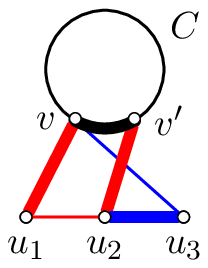}}
		\caption{Three cases in Lemma 1 \label{fig:Lemma1}}
	\end{figure}
	\end{proof}
With the aid of Lemma \ref{lem:AB} and the definitions of $t(v)$ and $s(v)$,
we obtain the following lemma which can be viewed as a weaker version of
Theorem \ref{Thm:key-1}.
\begin{lemma}\label{lem:edge}
Let $\{P,C\}$ be a PC 1-path-cycle factor of an edge-colored 
complete graph $G$ with  $P=u_1u_2\cdots u_m$ $(m\geq 2)$ and  
$ch(P)=[u_1,x,y,u_m]$. If for an edge $vv'\in E(C)$, there 
hold $col(u_1,\{v,v'\})=\{x\}$ and $col(u_m,\{v,v'\})=\{y\}$, 
then $G$ contains a PC path $Q$ with the following holds: 
(1)~$ch(Q)=ch(P)$;  (2)~$ V(Q)=V(C)\cup V(P)$ or $V(Q)=\{v,v'\}\cup V(P)$. 
\end{lemma}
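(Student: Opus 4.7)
The strategy is to reduce Lemma~\ref{lem:edge} to Lemma~\ref{lem:AB} in three steps. Writing $c_i := col(u_iu_{i+1})$, the plan is (1) to locate an index $r\in[2,m-1]$ and a vertex $w\in\{v,v'\}$ such that $w$ replaces $u_r$ on $P$; (2) to apply Lemma~\ref{lem:AB} to the three-vertex sub-path $P^* := u_{r-1}u_ru_{r+1}$ together with the cycle $C$; and (3) to glue the absorbed piece back into $P$ to form the desired $Q$. For Step~(1), unfolding the definition shows that $w$ replaces $u_r$ iff $s(w)\le r\le t(w)$. The boundary hypotheses $col(u_1,\{v,v'\})=\{x\}=\{c_1\}$ and $col(u_m,\{v,v'\})=\{y\}=\{c_{m-1}\}$ immediately give $t(v),t(v')\ge 2$ and $s(v),s(v')\le m-1$, so a valid $r$ exists precisely when $s(w)\le t(w)$ for some $w\in\{v,v'\}$.

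For Step~(2), assume WLOG that $w=v$. The PC sub-path $P^*$ has $ch(P^*)=[u_{r-1},c_{r-1},c_r,u_{r+1}]$. Since $v$ replaces the middle vertex of $P^*$ and $u_rv'\in E(G)$ by completeness, Lemma~\ref{lem:AB} applied to $\{P^*,C\}$ (as a PC 1-path-cycle factor of $G[V(P^*)\cup V(C)]$) produces a PC path $Q^*$ from $u_{r-1}$ to $u_{r+1}$ with $ch(Q^*)=ch(P^*)$ and with vertex set either $V(P^*)\cup V(C)$ or $V(P^*)\cup\{v,v'\}$. For Step~(3), set $Q := u_1\cdots u_{r-1}\, Q^*\, u_{r+1}\cdots u_m$, i.e., replace the three-vertex sub-path $P^*$ of $P$ by $Q^*$. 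Each of the three segments is PC, and at the two junctions $u_{r-1}, u_{r+1}$ the end-colors of $Q^*$ (namely $c_{r-1}$ and $c_r$) match the colors of the edges of $P$ being replaced, so the PC property propagates from $P$. Finally $ch(Q)=[u_1,x,y,u_m]=ch(P)$ and $V(Q)$ equals either $V(P)\cup V(C)$ or $V(P)\cup\{v,v'\}$, as required.

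The main obstacle is Step~(1): nothing in the hypotheses individually forces $s(v)\le t(v)$ or $s(v')\le t(v')$, since for $m$ large the bounds $t\ge 2$ and $s\le m-1$ leave ample slack. I expect to combine the constraints on both $v$ and $v'$ simultaneously, with a case split on the color $\gamma := col(vv')$, arguing that whenever neither of $v, v'$ admits a replacement one of the direct insertions $u_1\cdots u_r vv' u_{r+1}\cdots u_m$ or $u_1\cdots u_r v'v u_{r+1}\cdots u_m$, or the cycle insertion $u_1\cdots u_r (vCv') u_{r+1}\cdots u_m$ (for a suitable $r$), already yields a PC path with the correct character by color chasing in the spirit of the proof of Lemma~\ref{lem:AB}.
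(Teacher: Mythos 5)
Your reduction scheme is exactly the one the paper uses: locate a vertex $w\in\{v,v'\}$ that replaces some $u_r$, apply Lemma~\ref{lem:AB} to the three-vertex segment $u_{r-1}u_ru_{r+1}$ together with $C$, and splice the output back into $P$ (the character-preservation built into Lemma~\ref{lem:AB} is precisely what makes the gluing legal). Steps~(2) and~(3) of your plan are correct as written. The difficulty is that Step~(1) --- which carries essentially all of the combinatorial content of the lemma --- is not proved: you state it as an expectation, so what you have is an outline rather than a proof.

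Beyond being unfinished, the plan you sketch for Step~(1) would not close the gap as stated. First, your criterion ``a valid $r$ exists precisely when $s(w)\le t(w)$'' is only an ``if'': a replacement can exist even when $s(w)>t(w)$. The pivotal instance, which the paper isolates, is $col(wu_{t(w)})=col(wu_{t(w)-1})$, which forces the triangle $wu_{t(w)-1}u_{t(w)}w$ to be monochromatic and lets $w$ replace $u_{t(w)-1}$; this case is invisible to your criterion. (Also note that $t(w)$ and $s(w)$ are well defined only after one first disposes of $col(wu_2)=x$ and $col(wu_{m-1})=y$ by direct insertions near the ends of $P$, and that $m=2$ needs separate treatment since $[2,m-1]=\emptyset$ there.) Second, your list of fallbacks when no replacement exists --- inserting $vv'$, $v'v$, or $vCv'$ into a single slot of $P$ --- omits the configuration the paper resolves by absorbing $v$ and $v'$ at two \emph{distinct} positions of $P$ (the case $t(v)\neq t(v')$ or $s(v)\neq s(v')$); a single-slot insertion of the pair need not exist there. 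So the proposal identifies the right strategy and the right local lemma, but the case analysis that actually establishes Step~(1) is missing and the anticipated route to it is incomplete.
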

\begin{proof}
Let $[v,a,b,v']$ be the character of the PC path $vCv'$.
In the case $m=2$, we have $x=y$. One can easily verify that no matter $col(vv')$
equals $x$ or not, a desired PC path $Q$ always exists. Now assume $m\geq 3$.
If $col(vu_2)=x$, then since $u_1v'Cvu_2Pu_m$ is not a PC path, we have $x=a$ or
$x=b$, which implies that $x\neq col(vv')$. Thus $Q=u_1v{'}vu_2Pu_m$ is a desired
PC path. So for each $w\in \{v,v'\}$, assume $col(wu_2)\neq x$ and $col(wu_{m-1})\neq y$.
Hence $t(w)$ and $s(w)$ are well defined with $t(w),s(w)\in [2,m-1]$.
		
If  $col(wu_{t(w)})=col(wu_{t(w)-1})$ for some $w\in \{v,v'\}$, then  $wu_{t(w)}u_{t(w)-1}w$
is a monochromatic triangle. Note that $col(wu_2)\neq x$. Therefore $t(w)>2$ and $w$
can replace $u_{t(w)-1}$. According to Lemma \ref{lem:AB}, a desired PC path $Q$
exists. Consider the symmetry between $t(w)$ and $s(w)$. The only remaining case is that
$col(wu_{t(v)})\neq col(wu_{t(v)-1})$ and $col(wu_{s(v)})\neq col(wu_{s(v)+1})$
for each $w\in \{v,v'\}$. In this case, we can check that $s(w)\geq t(w)$. If $t(v)\neq t(v')$
or $s(v)\neq s(v')$, then a desired PC path can be found by absorbing
$v$ and $v'$ from distinct segments of $P$. Now assume that $t=t(v)=t(v')$ and $s=s(v)=s(v')$. If $t\neq s$,
then $t<s$ and $P$ also absorbs $\{v,v'\}$. So the only case left is
that $t=s$, which implies that $v$ can replace $u_t$. Applying Lemma \ref{lem:AB} 
to $v$ and the path $u_{t-1}u_tu_{t+1}$,
we find a desired PC path $Q$. 	
\end{proof}
The original proof of Theorem \ref{Thm:key-1} in \cite{FGGGJR:06} is by induction on
$2|C|+|P|$ and reducing the cases into $|C|\in \{3,5\}$. Our proof adheres the proof
structure and many notations, reducing the cases into $|C|=3$. New lemmas dramatically
shorten the induction process. Now we give the new proof of Theorem \ref{Thm:key-1}
as following.
\begin{proof}[{\bf Proof of Theorem \ref{Thm:key-1}}]
Let $P=u_1u_2\cdots u_m$ and $C=v_1v_2\cdots v_nv_1$. Then $col(u_1u_2)=x$ 
and $col(u_{m-1}u_m)=y$. Suppose that $(P,C)$ is a counterexample 
such that $2|C|+|P|$ is as small as possible.

\begin{claim}\label{cl:n-2}
For each edge $vv'\in E(C)$, the following statements hold: \\
(a) there is a PC path $Q$ with $V(P)=V(Q)\cup \{v,v'\}$ and $ch(Q)=ch(P)$;\\
(b) $G[V(C)\setminus \{v,v'\}]$ contains no spanning PC cycle.
\end{claim}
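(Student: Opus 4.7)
My plan is to exploit the minimality of the counterexample $(P,C)$ in $2|C|+|P|$ together with the just-proved Lemma~\ref{lem:edge}. Part~(a) is essentially a rewording of one conclusion of that lemma, once the ``Hamilton path'' alternative is ruled out by the counterexample hypothesis, while part~(b) is a standard trade-down argument: absorbing the pair $\{v,v'\}$ into the path produces a strictly smaller configuration of the same type, on which minimality must fail.

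For part~(a), I fix $vv'\in E(C)$ and invoke Lemma~\ref{lem:edge}. Its hypotheses are satisfied because the standing assumptions $col(u_1,V(C))=\{x\}$ and $col(u_m,V(C))=\{y\}$ restrict automatically to $col(u_1,\{v,v'\})=\{x\}$ and $col(u_m,\{v,v'\})=\{y\}$. The lemma then yields a PC path $Q$ with $ch(Q)=ch(P)$ whose vertex set is either $V(P)\cup V(C)$ or $V(P)\cup\{v,v'\}$. The first alternative would be a PC Hamilton path with the correct character, contradicting that $(P,C)$ is a counterexample; hence the second alternative holds, which is precisely~(a) (reading the claim as $V(Q)=V(P)\cup\{v,v'\}$, since $v,v'\notin V(P)$).

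For part~(b), I argue by contradiction. Suppose $G[V(C)\setminus\{v,v'\}]$ contains a spanning PC cycle $C'$, and let $Q$ be the PC path supplied by~(a). Then $\{Q,C'\}$ is a PC 1-path-cycle factor of $G$, since $V(Q)\cup V(C')=V(P)\cup V(C)=V(G)$ and $V(Q)\cap V(C')=\emptyset$. Moreover $ch(Q)=[u_1,x,y,u_m]$, and because $\emptyset\neq V(C')\subseteq V(C)$ (note $|C'|\ge 3$) the color restrictions $col(u_1,V(C'))=\{x\}$ and $col(u_m,V(C'))=\{y\}$ are inherited from the original hypothesis. Thus $(Q,C')$ satisfies the hypothesis of Theorem~\ref{Thm:key-1}, yet $2|C'|+|Q|=2(|C|-2)+(|P|+2)=2|C|+|P|-2$, strictly less than $2|C|+|P|$. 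By minimality $G$ contains a PC Hamilton path of character $ch(P)$, contradicting the choice of $(P,C)$. No step is genuinely hard here; the only care needed is to observe the monotonicity of the color restrictions at $u_1,u_m$ when passing from $V(C)$ to the subset $V(C')$, so that the hypotheses of Theorem~\ref{Thm:key-1} transfer cleanly to the smaller pair.
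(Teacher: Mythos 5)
Your proof is correct and follows essentially the same route as the paper: part (a) is Lemma~\ref{lem:edge} with the Hamilton-path alternative excluded by the counterexample assumption, and part (b) trades $\{Q,C'\}$ for the original factor and invokes minimality of $2|C|+|P|$. Your explicit check that the color restrictions at $u_1,u_m$ pass to $V(C')\subseteq V(C)$ (and your reading of the claim as $V(Q)=V(P)\cup\{v,v'\}$) only makes explicit what the paper leaves implicit.
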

\begin{proof}
Lemma  \ref{lem:edge} implies the existence of $Q$ in ($i$).
Suppose to the contrary that $H$ is a spanning PC cycle of $G[V(C)\setminus \{v,v'\}]$.
Since $2|H|+|Q|<2|C|+|P|$, there must be a PC path $Q'$ on $V(H)\cup V(Q)=V(C)\cup V(P)$
with $ch(Q')=ch(Q)=ch(P)$, a contradiction.
\end{proof}
\begin{claim}\label{cl:2a}
If $col(v_iv_{i+1})=col(v_jv_{j+1})=a$, then $col(v_iv_{j+1})\neq a$ 
and $col(v_jv_{i+1})\neq a$.
\end{claim}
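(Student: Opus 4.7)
}
I argue by contradiction. Suppose that $col(v_iv_{j+1})=a$; the parallel assertion $col(v_jv_{i+1})\neq a$ will then follow by exchanging the roles of $i$ and $j$. Since $v_iv_{i+1}$ and $v_jv_{j+1}$ are two color-$a$ edges of the PC cycle $C$, they cannot share a vertex, so $j\geq i+2$. The plan is to perform a 2-opt \emph{cut} on $C$: delete the edges $v_iv_{i+1}$ and $v_jv_{j+1}$, and reconnect the two resulting arcs with the chords $v_iv_{j+1}$ and $v_{i+1}v_j$. This produces the subcycle
\[
C_1\;=\;v_{j+1}v_{j+2}\cdots v_nv_1\cdots v_iv_{j+1},
\]
together with (when $j>i+2$) the subcycle $C_2=v_{i+1}v_{i+2}\cdots v_jv_{i+1}$. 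The cycle $C_1$ is automatically PC: its unique non-$C$ edge $v_iv_{j+1}$ has color $a$, while the two $C$-edges flanking it, $v_{i-1}v_i$ and $v_{j+1}v_{j+2}$, have colors different from $a$ because $C$ is PC.

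When $C_2$ is also PC --- which happens, for instance, whenever $col(v_jv_{i+1})=a$, by exactly the same argument as for $C_1$ --- the triple $(P,C_1,C_2)$ is a PC spanning $1$-path-$2$-cycle structure of $G$, and I exploit the minimality of $(P,C)$ twice. Since $|C_1|<|C|$ and the hypotheses of Theorem \ref{Thm:key-1} transfer to $(P,C_1)$ (because $V(C_1)\subseteq V(C)$), minimality produces a PC Hamilton path $Q$ of $G[V(P)\cup V(C_1)]$ with $ch(Q)=ch(P)$. Then $(Q,C_2)$ is a PC $1$-path-cycle factor of $G$ satisfying the same hypotheses, with $2|C_2|+|Q|=2|C|+|P|-|C_1|<2|C|+|P|$, and minimality again yields a PC Hamilton path of $G$ with character $ch(P)$, contradicting that $(P,C)$ was chosen as a counterexample. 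The boundary case $j=i+2$, where $C_2$ degenerates to a single edge, is handled in the same spirit: Claim \ref{cl:n-2}(a) applied to $v_{i+1}v_{i+2}$ absorbs those two vertices into a PC path $Q$ on $V(P)\cup\{v_{i+1},v_{i+2}\}$ with $ch(Q)=ch(P)$, so that $(Q,C_1)$ is already a PC $1$-path-cycle factor of $G$ of strictly smaller measure, and minimality once more provides the contradiction.

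The main obstacle is the remaining scenario $j>i+2$ with $C_2$ not PC, in which $col(v_{i+1}v_j)$ is forced to coincide with either $col(v_{i+1}v_{i+2})$ or $col(v_{j-1}v_j)$ (neither of which equals $a$). I would finish by turning this color coincidence into an absorbing handle in the spirit of Lemma \ref{lem:AB}: the shared color on $v_{i+1}v_j$ allows the offending endpoint $v_{i+1}$ or $v_j$ of $C_2$ to be peeled off and inserted into the PC path produced by Claim \ref{cl:n-2}(a) applied to a judiciously chosen $C$-edge inside $\{v_{i+1},\ldots,v_j\}$. What remains is a strictly shorter PC cycle on the residual vertices of $\{v_{i+1},\ldots,v_j\}$, and the configuration reduces to the previous situation, which we have already contradicted by minimality.
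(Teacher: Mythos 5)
There is a genuine gap. Your argument is sound in the sub-cases you actually complete: the degenerate case $j=i+2$, and the case where both chords close their arcs into PC cycles, where the 2-opt split plus a double application of the minimality of $2|C|+|P|$ works exactly as you describe (and matches the paper's strategy). But the case you yourself flag as ``the main obstacle'' --- $j>i+2$ with $C_2$ not properly colored --- is only a sketch, and the sketch does not go through. Lemma \ref{lem:AB} lets a vertex $v$ be absorbed only when it \emph{replaces} some $u_r$ on $P$, i.e.\ when $col(vu_{r-1})=col(u_{r-1}u_r)$ and $col(vu_{r+1})=col(u_ru_{r+1})$; the color coincidence $col(v_{i+1}v_j)\in\{col(v_{i+1}v_{i+2}),col(v_{j-1}v_j)\}$ gives you no such configuration, so there is no mechanism for ``peeling off'' $v_{i+1}$ or $v_j$ into the path. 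Moreover, after deleting one endpoint of the arc $v_{i+1}\cdots v_j$, closing the residue into a cycle requires a new chord (e.g.\ $v_{i+2}v_j$) whose color is completely uncontrolled, so the claimed ``strictly shorter PC cycle on the residual vertices'' need not exist, and the promised reduction to the previous situation is not established.

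The paper avoids this case entirely by a recoloring device you are missing. It assumes (after symmetry) that the chord closing \emph{one} arc has color $a$, which makes that arc a genuine PC cycle $C'$; for the \emph{other} arc it does not use its real chord at all, but declares the chord to have color $a$ by fiat, obtaining an artificially properly colored cycle $H$. Minimality applied to $(P,H)$ (the theorem holds for any edge-colored complete graph, so the recoloring is harmless) yields a PC path $Q$ with $ch(Q)=ch(P)$ absorbing $V(H)$. If $Q$ uses the fictitious edge $v_iv_{j+1}$, that edge is replaced by the real $C$-arc $v_iv_{i+1}\cdots v_jv_{j+1}$, whose first and last edges both have color $a$, producing a genuine PC Hamilton path; if not, $Q$ lives in $G$ and the pair $(Q,C')$ is a smaller genuine instance to which minimality applies again. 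To repair your write-up you would need to import this trick (or an equivalent way of closing the second arc), since the favorable case ``$C_2$ is PC'' cannot be forced.
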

\begin{proof}
If $v_iv_{j+1}\in E(C)$ or $v_jv_{i+1}\in E(C)$, then Claim \ref{cl:n-2}
and the fact that $C$ is a PC cycle imply the statement. Now assume
that $v_iv_{j+1}\not\in E(C)$ and $v_jv_{i+1}\not\in E(C)$. By symmetry,
it suffices to prove that $col(v_jv_{i+1})\neq a$. Suppose not.
Then $C'=v_{i+1}v_{i+2}\cdots v_{j}v_{i+1}$ is a PC cycle. Let $H$ be
the cycle $v_{j+1}v_{j+2}\cdots v_{i}v_{j+1}$ with colors of edges
first adhered from that in $G$ and then recoloring the edge
$v_{i}v_{j+1}$ by color $a$. Obviously, $H$ is a PC cycle and
$P$ can absorb $V(H)$ with $Q$ as a resulting PC path satisfying that
$ch(Q)=ch(P)$. If this new colored edge $v_{i}v_{j+1}$ is contained in
$Q$, then replace the segment $v_{i}v_{j+1}$ in $Q$ by
$v_iv_{i+1}\cdots v_{j}v_{j+1}$,
we obtain a desired PC path, a contradiction. Hence $Q$ must be an
edge-colored subgraph of $G$. Consider the pair $(C',Q)$. Since
$2|C'|+|Q|<2|C|+P$, there must be a PC path $Q'$ on $V(Q)\cup V(C')=V(G)$
with $ch(Q')=ch(Q)=ch(P)$, a contradiction.
\end{proof}
From Claims \ref{cl:n-2} and \ref{cl:2a}, we immediately have the following claim.

\begin{claim}\label{cl:3x}
If $n\geq 6$, then $col(v_{i}v_{i+1})\neq col(v_{i+4}v_{i+5})$ for each $i\in [1,n]$.
\end{claim}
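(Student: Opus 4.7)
The plan is to proceed by contradiction: assume that $col(v_iv_{i+1})=col(v_{i+4}v_{i+5})=a$ for some $i\in[1,n]$, and produce a spanning PC cycle of $G[V(C)\setminus\{v_{i+2},v_{i+3}\}]$. Since $v_{i+2}v_{i+3}\in E(C)$, this immediately contradicts Claim \ref{cl:n-2}(b) applied to that edge, finishing the argument.

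First I would invoke Claim \ref{cl:2a} with $j=i+4$: from $col(v_iv_{i+1})=col(v_{i+4}v_{i+5})=a$ it follows at once that $col(v_{i+1}v_{i+4})\neq a$. This chord will be the key object. Since $n\geq 6$, the indices $i,i+1,i+2,i+3,i+4,i+5$ are six distinct residues mod $n$, so the set $V(C)\setminus\{v_{i+2},v_{i+3}\}$ has $n-2\geq 4$ vertices and contains both endpoints of the chord.

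Next I would assemble the candidate cycle. Take the arc of $C$ running $v_{i+4}v_{i+5}v_{i+6}\cdots v_{i-1}v_iv_{i+1}$, which lies entirely in $G[V(C)\setminus\{v_{i+2},v_{i+3}\}]$ and is properly colored because $C$ is; then close it with the chord $v_{i+1}v_{i+4}$, which exists as $G$ is complete. The only adjacency relations of colors to verify are at the two attachment points $v_{i+1}$ and $v_{i+4}$: the two edges adjacent to the chord are $v_iv_{i+1}$ and $v_{i+4}v_{i+5}$, both of color $a$, while the chord itself has color $\neq a$ by the previous step. Hence the resulting cycle is a spanning PC cycle of $G[V(C)\setminus\{v_{i+2},v_{i+3}\}]$, delivering the contradiction.

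No step looks delicate; the claim is essentially a one-line combination of Claims \ref{cl:n-2} and \ref{cl:2a}, with the role of the hypothesis $n\geq 6$ being solely to guarantee that $v_i,v_{i+1},v_{i+2},v_{i+3},v_{i+4},v_{i+5}$ are pairwise distinct so that the constructed cycle is honest rather than degenerate.
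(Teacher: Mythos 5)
Your proof is correct and is exactly the argument the paper intends: the paper derives Claim~\ref{cl:3x} ``immediately'' from Claims~\ref{cl:n-2} and~\ref{cl:2a} without spelling out details, and your elaboration---using Claim~\ref{cl:2a} to show the chord $v_{i+1}v_{i+4}$ avoids color $a$ and then closing the long arc into a spanning PC cycle of $G[V(C)\setminus\{v_{i+2},v_{i+3}\}]$ to contradict Claim~\ref{cl:n-2}(b)---is precisely that derivation.
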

For $i\in [2,m-1]$, let $F_+(u_i)=\{v\in V(C): col(vu_i)\neq col(u_iu_{i+1})\}$
and
$F_-(u_i)=\{v\in V(C): col(vu_i)\neq col(u_iu_{i-1})\}.$
Note that $F_+(u_i)\neq \emptyset$ for each $i\in [2,m-1]$. Otherwise, by
considering the pair $(u_iPu_m,C)$, we obtain a PC path $Q$ from $u_i$ to
$u_m$ with $ch(Q)=ch(u_iPu_m)$, which can be extended into a desired PC
Hamilton path of $G$ by adding the segment $u_1Pu_i$. Similarly,
$F_-(u_i)\neq \emptyset$ for each $i\in [2,m-1]$. In particular,
define $F_-(u_1)=F_+(u_m)=V(C)$. The remaining proof is proceeded by
studying the property of $F_+(u_2),F_+(u_3),F_-(u_{m-1})$ and $F_-(u_{m-2})$.
\begin{claim}\label{cl:tauu2}
If $n\geq 4$, then the following holds.\\
(a) $col(vu_2)\in N_C^c(v)$ and $col(v'u_{m-1})\in N_C^c(v')$ for all $v\in F_+(u_2)$ and $v'\in F_-(u_{m-1})$;\\
(b) $x\not\in col(u_2,V(C))$ and  $y\not\in col(u_{m-1},V(C))$.
\end{claim}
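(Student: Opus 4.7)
The approach is contradiction, exploiting the minimality of $2|C|+|P|$ for the chosen counterexample $(P,C)$. By reversing $P$ it suffices to prove the statements for $F_+(u_2)$ and for the color $x$ at $u_2$; the analogues for $F_-(u_{m-1})$ and $y$ then follow.

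For part (b) I will deduce it from part (a) by an explicit Hamilton path. Suppose $col(v_iu_2)=x$ for some $v_i\in V(C)$. Because $col(u_1u_2)=x$ and $P$ is PC we have $col(u_2u_3)\ne x$, so $v_i\in F_+(u_2)$. Part (a) then forces $x\in N_C^c(v_i)$, so up to the symmetry between $v_{i-1}$ and $v_{i+1}$ (which are distinct since $n\ge 4$), say $col(v_iv_{i+1})=x$. Then
\[
P^\ast \;=\; u_1\,v_{i+1}\,v_{i+2}\cdots v_{i-1}\,v_i\,u_2\,u_3\cdots u_m
\]
is a PC Hamilton path with $ch(P^\ast)=ch(P)$: the pairs $(u_1v_{i+1},v_{i+1}v_{i+2})$, $(v_{i-1}v_i,v_iu_2)$ and $(v_iu_2,u_2u_3)$ all consist of edges of distinct colors (using $col(v_iv_{i+1})=x$ together with $C$ being PC, and $col(v_iu_2)=x\ne col(u_2u_3)$), while the internal edges of the $C$- and $P$-segments are PC by hypothesis. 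This contradicts the choice of $(P,C)$.

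For part (a) suppose for contradiction $v=v_i\in F_+(u_2)$ with $c:=col(v_iu_2)\notin N_C^c(v_i)$; then $c$ is distinct from each of $col(v_{i-1}v_i)$, $col(v_iv_{i+1})$ and $col(u_2u_3)$. The plan is to construct either a PC Hamilton path from $u_1$ to $u_m$ or a strictly smaller PC $1$-path-cycle factor $(Q,C^\ast)$ with $ch(Q)=ch(P)$, contradicting minimality. A natural starting point is the PC path $u_1u_2v_iv_{i+1}\cdots v_{i-1}$ (valid when $c\ne x$), using $v_iu_2$ as a bridge through which $u_3,\ldots,u_m$ can be re-attached, together with a variant for the case $c=x$ that reroutes along $C$ in the opposite direction. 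The main obstacle is precisely this re-attachment: Claim~\ref{cl:n-2}(b) forbids the simplest reduction (removing $\{v_{i-1},v_i\}$ or $\{v_i,v_{i+1}\}$ from $C$ and keeping the remaining $n-2$ vertices as a PC spanning cycle), so the argument must exploit the full three-fold freedom on $c$, splitting into subcases $c=x$ versus $c\ne x$ and analysing the colours of auxiliary edges such as $v_{i\pm 1}u_j$ to find a valid reattachment that preserves the character $[u_1,x,y,u_m]$.
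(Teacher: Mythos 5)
Your reduction by the symmetry reversing $P$ is fine, and your derivation of part~(b) from part~(a) is correct and complete: once $col(v_iu_2)=x$ forces $v_i\in F_+(u_2)$ and part~(a) places $x$ on a $C$-edge at $v_i$, the path $u_1v_{i+1}\cdots v_{i-1}v_iu_2Pu_m$ is indeed a PC Hamilton path with the right character. This is essentially the same use of (a) as in the paper, packaged as a direct construction rather than as a contradiction with (a), and it is perfectly acceptable.

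Part~(a), however, is not proved. What you give is a declared plan whose central step --- the ``re-attachment'' of $u_3,\dots,u_m$ --- you explicitly leave open, and the starting configuration you propose, $u_1u_2v_iv_{i+1}\cdots v_{i-1}$, exits $P$ at $u_2$ into the cycle and therefore strands the entire tail $u_3Pu_m$; there is no evident way to recover a path with character $[u_1,x,y,u_m]$ from it, and no case analysis is actually carried out. The idea you are missing is to travel in the opposite direction: the theorem's hypothesis $col(u_1,V(C))=\{x\}$ lets $u_1$ enter $C$ at \emph{any} vertex with known color $x$, so one considers the two Hamilton paths $u_1v_{i+1}Cv_iu_2Pu_m$ and $u_1v_{i-1}Cv_iu_2Pu_m$, which traverse all of $C$ and re-enter $P$ at $u_2$ through $v_i$. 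Since by assumption $col(v_iu_2)$ differs from both $C$-edges at $v_i$ and from $col(u_2u_3)$, every junction except the first is automatically proper, so the failure of these two paths forces $col(v_{i+1}v_{i+2})=x$ and $col(v_{i-2}v_{i-1})=x$. For $n=4$ these are adjacent edges of the PC cycle $C$, an immediate contradiction; for $n\geq 5$ Claim~\ref{cl:2a} gives $col(v_{i-1}v_{i+1})\neq x$, and a third rerouted Hamilton path ($u_1v_{i-2}v_{i-3}\cdots v_{i+1}v_{i-1}v_iu_2Pu_m$ in the paper's labelling) forces a second $x$-edge adjacent to $v_{i-2}v_{i-1}$ on $C$, again a contradiction. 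In particular no new $1$-path-cycle factor or appeal to minimality beyond Claim~\ref{cl:2a} is needed for (a); as written, your proposal establishes only the easier half of the claim.
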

\begin{proof}
(a) Assume $v_1\in F_+(u_2)$. By the symmetry between $u_2$ and $u_{m-1}$,
it suffices to prove $col(v_1u_2)\in \{col(v_1v_2),col(v_nv_1)\}$. Suppose not.
Since $u_1v_2Cv_1u_2Pu_m$ and $u_1v_nCv_1u_2Pu_m$ are not PC paths,
we get $col(v_2v_3)=col(v_{n-1}v_n)=x$, which is a contradiction when $n=4$.
Thus $n\geq 5$. Claim \ref{cl:2a} tells that $col(v_nv_2)\neq x$. 	
Let $col(v_nv_1)=\alpha$ and $col(v_1v_2)=\beta$. Since $\alpha\neq \beta$,
we assume without loss of generality that $col(v_nv_2)\neq \alpha$.
Consider the path $u_1v_{n-1}v_{n-2}\cdots v_2v_nv_1u_2Pu_m$, we get
$col(v_{n-2}v_{n-1})=x$, contradicting that $col(v_{n-1}v_n)=x$.
	
(b) Suppose to the contrary that  $col(v_1u_2)=x$. Apparently, $v_1\in F_+(u_2)$.
Without loss of generality, assume that $col(v_1v_2)\neq x$. Since $u_1v_nCv_1u_2Pu_m$
is not a PC path, we have $col(v_{n-1}v_n)=x$, which forces $col(v_{n}v_1)\neq x$. 
Thus $col(v_1u_2)=x\not\in N^c_C(v_1)$, which contradicts Claim \ref{cl:tauu2}$(a)$. 
By the symmetry between $u_2$ and $u_{m-1}$, we also get $y\not\in col(u_{m-1},V(C))$.	
\end{proof}

Since $F_+(u_2)\neq \emptyset$, in the following, when $n\geq 4$, 
we assume without loss of generality that $v_1\in F_+(u_2)$ and 
$col(u_2v_1)=col(v_1v_2)=z$ and $z\neq x$.

\begin{claim}\label{cl:v2}
If $n\geq 4$, then $s(v_2)\in [3,m-2]$, the vertex $v_2$ replaces $u_{s(v_2)+1}$
and $col(v_2v_3)=col(v_4v_5)=x$ (indices are taken modulo $n$).
\end{claim}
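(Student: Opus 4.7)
The plan is to establish the four assertions of Claim~\ref{cl:v2} in order, each time by assuming the assertion fails and exhibiting either a PC Hamilton path of $V(P)\cup V(C)$ with character $[u_1,x,y,u_m]$, or a PC $1$-path-cycle factor with strictly smaller $2|C|+|P|$ satisfying the hypotheses of Theorem~\ref{Thm:key-1}; both contradict the minimality of $(P,C)$. For the equality $col(v_2v_3)=x$ I would test the candidate Hamilton path $H=u_1v_2v_3\cdots v_nv_1u_2u_3\cdots u_m$: every pair of consecutive colors on $H$ automatically differs except possibly at $(col(u_1v_2),col(v_2v_3))=(x,col(v_2v_3))$, since $C$ and $P$ are PC, $col(v_nv_1)\neq col(v_1v_2)=z=col(v_1u_2)$ by PC-ness of $C$, and $col(v_1u_2)=z\neq col(u_2u_3)$ because $v_1\in F_+(u_2)$. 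Hence $H$ fails PC only when $col(v_2v_3)=x$, and this equality must hold.

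For $s=s(v_2)$, Claim~\ref{cl:tauu2}(b) gives $col(v_2u_2)\neq x=col(u_1u_2)$, so $s\geq 2$, while $col(v_2u_m)=y=col(u_{m-1}u_m)$ gives $s\leq m-1$. If $s=2$ then $col(v_2u_j)=col(u_{j-1}u_j)$ for every $j\in[3,m]$ and $v_2$ already replaces $u_2$ on $P$; swapping $v_2$ and $u_2$ and then applying Lemma~\ref{lem:edge} to an appropriate edge of the re-routed cycle produces a PC Hamilton path. The case $s=m-1$ is ruled out by the symmetric argument at the $u_m$-end of $P$, using a vertex $v'\in F_-(u_{m-1})$. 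Once $s\in[3,m-2]$ is fixed, the definition of $s$ already supplies $col(v_2u_{s+2})=col(u_{s+1}u_{s+2})$, which is one of the two color equalities required for $v_2$ to replace $u_{s+1}$. If the other equality $col(v_2u_s)=col(u_su_{s+1})$ failed, then $v_2\in F_+(u_s)\cap F_-(u_s)$ would give enough color freedom at $u_s$ to thread all of $V(C)$ between $u_s$ and $u_{s+1}$ as a PC segment, producing a PC Hamilton path and again contradicting minimality.

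For $col(v_4v_5)=x$ I would repeat the Step~1 argument on the Hamilton path $H'=u_1v_4v_5\cdots v_nv_1v_2v_3u_2u_3\cdots u_m$: using $col(v_2v_3)=x$ and $col(v_3u_2)\neq x$ from Claim~\ref{cl:tauu2}(b), every pair of consecutive colors on $H'$ is automatically distinct except possibly at $(col(u_1v_4),col(v_4v_5))=(x,col(v_4v_5))$, provided $v_3\in F_+(u_2)$. If $v_3\notin F_+(u_2)$, I would exploit the replacement established above: swapping $v_2$ for $u_{s+1}$ in $P$ and re-inserting $u_{s+1}$ into $C$ yields a new $1$-path-cycle factor $(P^\ast,C^\ast)$ of the same character, and running the Step~1 argument on $(P^\ast,C^\ast)$ with the edge $u_{s+1}v_3\in E(C^\ast)$ in the role of $v_1v_2$ again isolates $col(v_4v_5)=x$ as the sole possible conflict.

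The main obstacle is the replacement assertion in Step~2: ruling out $v_2\in F_+(u_s)$ requires exhibiting a long PC Hamilton path whose edges involve colors not pinned by the preceding claims, and the PC verification forces a careful orientation choice of the inserted cycle and a decisive appeal to the equality $col(v_2v_3)=x$ obtained in Step~1.
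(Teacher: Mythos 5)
Your Step 1 (the path $u_1v_2v_3\cdots v_nv_1u_2Pu_m$ forcing $col(v_2v_3)=x$) is exactly the paper's opening move and is correct. The rest of the proposal, however, leans on two ``swap'' operations that do not actually produce PC $1$-path-cycle factors, and this is a genuine gap rather than a detail. When you swap $v_2$ and $u_2$ (to handle $s=2$), the re-routed cycle $v_1u_2v_3\cdots v_nv_1$ contains the edge $u_2v_3$, whose color is completely unconstrained, so the cycle need not be PC and neither Lemma \ref{lem:edge} nor the minimality of $2|C|+|P|$ applies; the same objection kills the fallback in Step 3, where $C^\ast=v_1u_{s+1}v_3\cdots v_nv_1$ involves the edges $v_1u_{s+1}$ and $u_{s+1}v_3$ with unknown colors. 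The paper avoids swaps entirely: $s(v_2)\geq 3$ follows because otherwise $u_1u_2v_1v_nv_{n-1}\cdots v_2u_3Pu_m$ is a PC Hamilton path (this is where $col(v_2v_3)=x$ is used), and $col(v_4v_5)=x$ follows by applying Lemma \ref{lem:AB} to the edge $v_2v_3$ and the segment $u_{s}u_{s+1}u_{s+2}$, then prepending $u_1v_4v_5\cdots v_1u_2\cdots u_s$ --- which sidesteps your unresolved case $v_3\notin F_+(u_2)$ altogether.

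Two further points. First, your replacement argument threads ``all of $V(C)$ between $u_s$ and $u_{s+1}$,'' but the colors of edges from $u_s,u_{s+1}$ to vertices of $C$ other than $v_2$ are not pinned down, so no such PC segment can be verified; the correct path inserts only $v_2$ between $u_s$ and $u_{s+1}$ (legal precisely when $col(v_2u_s)\neq col(v_2u_{s+1})$, since $col(v_2u_{s+1})=col(u_su_{s+1})$ by maximality of $s$) and threads $V(C)\setminus\{v_2\}$ at the $u_1$-end as $u_1v_3v_4\cdots v_nv_1u_2$. Second, your ordering is backwards for excluding $s=m-1$: the ``symmetric argument using $v'\in F_-(u_{m-1})$'' constrains a neighbour of $v'$ on $C$, not $v_2$, so it says nothing about $s(v_2)$. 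In the paper the exclusion comes \emph{after} the monochromatic-triangle step: if $s=m-1$ the triangle $v_2u_{m-1}u_m$ would be monochromatic of color $y$, contradicting Claim \ref{cl:tauu2}(b). You would need to either adopt that order or note that Claim \ref{cl:tauu2}(b) already gives $col(v_2u_{m-1})\neq y=col(v_2u_m)$, so $s=m-1$ yields the PC Hamilton path $u_1v_3\cdots v_1u_2Pu_{m-1}v_2u_m$ directly.
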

\begin{proof}
Since  $u_1v_2v_3\cdots v_nv_1u_2Pu_m$ is not a PC path (see Figure \ref{fig:claim5}(a)), 
we have $col(v_2v_3)=x$.
We further assert that $col(v_2u_3)\neq col(u_2u_3)$. Otherwise,
$u_1u_2v_1v_n\cdots v_2u_3Pu_m$ is a PC path (see Figure \ref{fig:claim5}(b)).
Recall the definition of $s(v_2)$, we get $s(v_2)\geq 3$. If
$col(v_2u_{s(v_2)})\neq col(v_2u_{s(v_2)+1})$, then
$u_1v_3v_4\cdots v_nv_1u_2Pu_{s(v_2)}v_2u_{s(v_2)+1}Pu_m$ is a PC Hamilton path
(see Figure \ref{fig:claim5}(c)),  a contradiction. Hence
$col(v_2u_{s(v_2)})= col(v_2u_{s(v_2)+1})$, i.e. the triangle
$v_2u_{s(v_2)}u_{s(v_2)+1}v_2$ is monochromatic. If $s(v_2)= m-1$,
then this triangle is of color $y$, contradicting Claim \ref{cl:tauu2}(b).
In summary, $s(v_2)\in [3,m-2]$ and $v_2$ replaces $u_{s(v_2)+1}$ on $P$.
Suppose that $col(v_4v_5)\neq x$. Apply Lemma \ref{lem:AB} to the edge $v_2v_3$
on $C$ and the PC path $Q'=u_{s(v_2)}u_{s(v_2)+1}u_{s(v_2)+2}$. We can see
that either as a segment of $P$, $Q'$ absorbs $v_2Cv_3$ with resulting a
desired PC Hamilton path or $Q'$ absorbs $v_2v_3$ with resulting a PC path
$W$ such that $V(W)=\{u_i: s(v_2)\leq i\leq m\}\cup \{v_2,v_3\}$ and
$ch(W)=[u_{s(v_2)},col(u_{s(v_2)}u_{s(v_2)+1}), y, u_m]$. Let $Q=u_1v_4v_5
\cdots v_1u_2u_3\cdots u_{s(v_2)}Wu_m$. Then $Q$ is a PC Hamilton
path with $ch(Q)=ch(P)$, a contradiction.
\end{proof}
\begin{figure}[h]
	\centering
	\subfigure[$col(v_2v_3)\neq x$]{
		\includegraphics[width=0.26\textwidth]{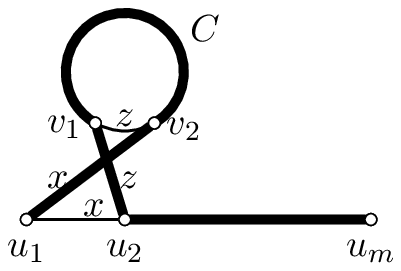}}
	\hskip 15pt
	\subfigure[$col(v_2u_3)=col(u_2u_3)$]{
		\includegraphics[width=0.26\textwidth]{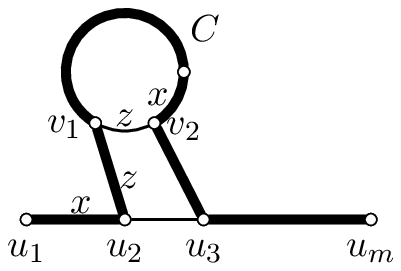}}
	\hskip 15pt
	\subfigure[$col(v_2u_{s(v_2)})\neq col(v_2u_{s(v_2)+1})$]{
		\includegraphics[width=0.32\textwidth]{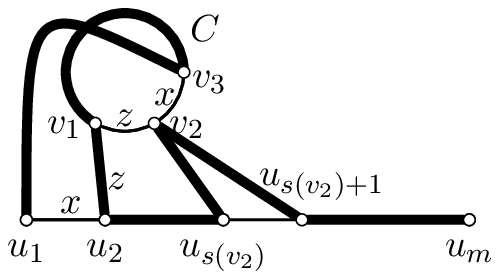}}
	\caption{Cases in the proof of Claim \ref{cl:v2} \label{fig:claim5}}
\end{figure}

\begin{claim}
$n=3$.
\end{claim}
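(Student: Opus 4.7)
We assume for contradiction that $n \geq 4$ and aim to derive a contradiction from the color constraints already obtained in Claims~\ref{cl:n-2}--\ref{cl:v2}, supplemented by their natural symmetric counterparts. The overall strategy is to force so much structure on $C$ that either a PC Hamilton path of $G$ must exist (contradicting that $(P,C)$ is a counterexample) or the coloring of $C$ becomes inconsistent with PC-ness together with Claim~\ref{cl:n-2}(b).

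\emph{Phase 1 (symmetrize and iterate).} Claim~\ref{cl:v2} gave $col(v_2v_3)=col(v_4v_5)=x$ and showed that $v_2$ replaces $u_{s(v_2)+1}$ on $P$. The first step is to observe that the whole setup is symmetric in $(u_2,x)$ and $(u_{m-1},y)$: by reversing $P$ and repeating the argument of Claim~\ref{cl:v2} starting from some $v'\in F_-(u_{m-1})$, we obtain an analogous pair of $y$-colored edges on $C$, also two steps apart. In parallel, if $v_2$ or $v_3$ belongs to $F_+(u_2)$ then Claim~\ref{cl:tauu2}(a) forces $col(u_2v_i)$ to equal the color of one of its $C$-neighbors, and the backward form of the Claim~\ref{cl:v2} argument fires again to produce a further $x$-edge on $C$. (If instead both $v_2$ and $v_3$ are outside $F_+(u_2)$, then $col(u_2v_2)=col(u_2v_3)=col(u_2u_3)$, which is a strong structural constraint that can be exploited directly via Lemma~\ref{lem:AB}, applied to the segment $u_{s(v_2)}u_{s(v_2)+1}u_{s(v_2)+2}$ and the edge $v_2v_3$, to build a PC Hamilton path as in Claim~\ref{cl:v2}.)

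\emph{Phase 2 (case analysis on $n$).} For $n \geq 6$, Claim~\ref{cl:3x} asserts $col(v_iv_{i+1})\neq col(v_{i+4}v_{i+5})$ for every $i$. The two $x$-edges $v_2v_3,v_4v_5$ from Claim~\ref{cl:v2}, together with a third $x$-edge produced in Phase~1, will have two indices at distance $4$ in the cyclic order, violating Claim~\ref{cl:3x}. For $n\in\{4,5\}$ we rely on the smallness of $C$: the $x$- and $y$-edge constraints from Phase~1 pin down the coloring of $C$ almost entirely, and then Claim~\ref{cl:2a} (for $n=4$) or Claim~\ref{cl:n-2}(b), which forbids a spanning PC triangle on the $3$ leftover vertices (for $n=5$), rules out every remaining candidate coloring. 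Throughout, if any such candidate does survive, we apply Lemma~\ref{lem:AB} to the segment around $u_{s(v_2)+1}$ (which $v_2$ replaces) to absorb an appropriate portion of $C$ and produce a PC Hamilton path of $G$, again contradicting minimality.

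\emph{Main obstacle.} The delicate part is the small cases $n=4$ and $n=5$, where Claim~\ref{cl:3x} is unavailable and the periodicity argument collapses. Here one has to chase the color constraints by hand: the forced colors of the edges of $C$, combined with the forbidden colors of $col(u_2,V(C))$ and $col(u_{m-1},V(C))$ from Claim~\ref{cl:tauu2}(b), must be shown either to contradict PC-ness of $C$, or to permit an absorption via Lemma~\ref{lem:AB} that yields a PC Hamilton path. The $n=5$ case is the most delicate, because a spanning PC triangle on three residual vertices is a priori plausible and Claim~\ref{cl:n-2}(b) is exactly what must be invoked to exclude it.
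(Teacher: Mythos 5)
Your plan has a genuine gap, and it misses the mechanism the paper actually uses. For $n\geq 5$ the paper does not derive a contradiction from the colouring of $C$ alone: it observes that Claims~\ref{cl:2a} and~\ref{cl:v2} make $C'=v_2v_3v_4v_5v_2$ a PC $4$-cycle (the edges $v_2v_3$ and $v_4v_5$ have colour $x$, and the two connecting edges avoid $x$ by Claim~\ref{cl:2a}), and that $Q=u_1v_6v_7\cdots v_1u_2Pu_m$ (or $Q=u_1v_1u_2Pu_m$ when $n=5$) is a PC path with $ch(Q)=ch(P)$ --- Claim~\ref{cl:3x} is used only to guarantee $col(v_6v_7)\neq x$ so that $Q$ is properly coloured at $v_6$. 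Then $(Q,C')$ satisfies the hypotheses of Theorem~\ref{Thm:key-1} with $2|C'|+|Q|<2|C|+|P|$, so minimality of the counterexample yields a PC Hamilton path with the right character, a contradiction. Your Phase~2 for $n\geq 6$ instead asserts that a ``third $x$-edge'' produced in Phase~1 will land at cyclic distance $4$ from one of $v_2v_3,v_4v_5$ and violate Claim~\ref{cl:3x}; nothing in your Phase~1 guarantees the existence, let alone the position, of such an edge (the symmetric argument at $u_{m-1}$ produces $y$-edges, whose location relative to $v_2,\ldots,v_5$ is uncontrolled), so this step fails as stated. More fundamentally, your plan never exploits the minimality of $2|C|+|P|$ by constructing a smaller $1$-path-cycle factor, which is the key idea here.

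The small cases are also not actually handled. For $n=5$ your appeal to Claim~\ref{cl:n-2}(b) ``ruling out every remaining candidate colouring'' is an assertion, not an argument; the paper needs no such case-chase because the same minimality reduction covers $n=5$. For $n=4$ the paper's proof is a substantial block of analysis: it picks $v^*\in F_+(u_3)$, shows $col(u_3v^*)=x$, reduces to the situation where $u_2wpu_2$ is a monochromatic triangle and $col(u_2v^*)=col(u_2q)$, feeds the vertex playing the role of $v_2$ back into Lemma~\ref{lem:AB} via Claim~\ref{cl:v2}, and finally switches to the symmetric end $F_-(u_{m-1})$ to assemble the Hamilton path. ``Chasing the colour constraints by hand'' is precisely the content that is missing; as written, the proposal is a programme rather than a proof, and its one concrete mechanism (the distance-$4$ clash) is unjustified.
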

\begin{proof}
If $n\geq 5$, then Claims \ref{cl:2a} and \ref{cl:v2} force $C'=v_2v_3v_4v_5v_2$ being a PC cycle. Claim \ref{cl:3x} tells $col(v_6v_7)\neq x$ when $n\geq 6$. Let $Q=u_1v_6v_7\cdots v_1u_2Pu_m$ (particularly, $Q=u_1v_1u_2Pu_m$ when $n=5$). Thus $(Q,C')$ is a counterexample with $2|C'|+|Q|<2|C|+|P|$, a contradiction.

In the case $n=4$, Claim \ref{cl:v2} tells that $C$ contains two edges of color $x$.
Let $v^*$ be a vertex in $F_+(u_3)$. Rename the vertices of $C$ such that $C=v^*wpqv^*$
and $col(v^*w)=x$. Since $x\not\in col(u_2,V(C))$ (by Claim \ref{cl:tauu2}(b)), if
$col(u_3v^*)\neq x$, then $Q=u_1u_2qpwv^*u_3Pu_m$ is a desired PC path, a
contradiction. Hence $col(u_3v^*)=x$. Define a PC path as follows.
\[
Q=\begin{cases}
	u_1u_2wpqv^*u_3Pu_m, &\text{if $col(u_2w)\neq col(wp)$;}\cr
	u_1wu_2pqv^*u_3Pu_m, &\text{if  $col(u_2w)\neq col(u_2p)$;}\cr
    u_1wpqu_2v^*u_3Pu_m, &\text{if $col(u_2v^*)\neq col(u_2q)$.}\cr
\end{cases}
\]	
If one of the above conditions holds, then $Q$ is a desired PC path.
Therefore we assume that
$u_2wpu_2$ is a monochromatic triangle of color $a$ and $col(u_2v^*)=col(u_2q)=b$.
If $a\neq col(u_2u_3)$, then choose $w$ as $v_1$, and consequently $p$ plays
the role of $v_2$. If $b\neq col(u_2u_3)$, then by Claim \ref{cl:tauu2} we have
$v^*\in F_+(u_2)$ and $col(qv^*)=b$. By a similar argument, the vertex $q$ can
play the role of $v_2$. Since $F_+(u_2)\neq \emptyset$, we have $a\neq col(u_2u_3)$
or $b\neq col(u_2u_3)$. Recall the property of $v_2$ in Claim \ref{cl:v2}, either
$p$ or $q$ can replace $u_{s(v_2)+1}$ on $P$ with $s(v_2)\in [3,m-2]$. Apply
Lemma 1 to the edge $pq$ and the segment $u_{s(v_2)}u_{s(v_2)+1}u_{s(v_2)+2}$,
we see that the path $u_3Pu_m$ absorbs the edge $pq$ with resulting a PC path
$Q'$ such that $ch(Q')=[u_3,col(u_3u_4),y,u_m]$. If $a\neq b$, then
$Q=u_1wu_2v^*u_3Q'u_m$ is a PC Hamilton path with $ch(Q)=ch(P)$, a contradiction.
Hence $a=b$ and all the edges between $u_2$ and $C$ have a same color, i.e.
color $z\neq col(u_2u_3)$. Choose a vertex $v'\in F_-(u_{m-1})$. By the
symmetry between $(u_2,x)$ and $(u_{m-1},y)$, we get $col(v'u_{m-1})\neq y$
and $u_{m-2}u_{m-1}v'u_m$ is a PC path. Rename the vertices on $C$ again
with assuming $C=v'w_1w_2w_3v'$ and $col(v'w_1)=col(w_2w_3)=x$. Then
$Q=u_1w_1w_2w_3u_2Pu_{m-2}u_{m-1}v'u_m$ is a PC Hamilton path with $ch(Q)=ch(P)$, a contradiction.
\end{proof}

Now we deal with the only remaining case $n=3$. Let $col(v_iv_{i+1})=i+2$
(indices are taken modulo $3$) for $i=1,2,3$. Let $col(u_2u_3)=\alpha$.
Without loss of generality, assume that $v_1\in F_+(u_2)$. Since neither
$u_1v_3v_2v_1u_2Pu_m$ nor $u_1v_2v_3v_1u_2Pu_m$ is a PC path, we have $x=1$.
Thus $x\not\in N^c_C(v_1)$.	Note that $x\in N^c_C(v_2)\cap N^c_C(v_3)$.
We have $v_2,v_3\not\in F_+(u_2)$, namely $col(v_2u_2)=col(v_3u_2)=\alpha$.
Apply Lemma \ref{lem:edge} to $v_2v_3$ and the PC path $u_2Pu_m$.
Since there is no PC path on $V(C)\cup V(P)$, we get a PC path $P'$
on $(V(C)\setminus\{v_1\})\cup (V(P)\setminus \{u_1\})$ with
$ch(P')=ch(u_2Pu_m)$. If $col(v_1u_2)\neq x$, then $Q=u_1v_1u_2P'u_m$
is a PC path on $V(C)\cup V(P)$ with $ch(Q)=ch(P)$, a contradiction.
Thus $u_1u_2v_1u_1$ is a monochromatic triangle of color $x=1$.
Now consider colors in $col(u_3,V(C))$. If $v_1\in F_+(u_3)$, then
either $u_1u_2v_3v_2v_1u_3Pu_m$ or $u_1u_2v_2v_3v_1u_3Pu_m$ is a
desired PC path, a contradiction. Thus $v_1\not\in F_+(u_3)$. If
$v_2\in F_+(u_3)$, then discuss the color of $v_2u_3$. 			
Since neither $u_1v_3u_2v_1v_2u_3Pu_m$ nor $u_1v_3v_1u_2v_2u_3Pu_m$
is a PC path, we get $col(v_2,\{v_1,u_2,u_3\})=\{\alpha\}=\{3\}$. 			
If $v_3\in F_+(u_3)$, then similarly, we can get
$col(v_3,\{v_1,u_2,u_3\})=\{\alpha\}=\{2\}$. Therefore, only one
of $v_2$ and $v_3$ is in $F_+(u_3)$. Hence, in summary, without
loss of generality, we assume that $F_+(u_2)=\{v_1\}$,
$col(v_1u_2)=x=1$, $F_+(u_3)=\{v_2\}$ and $col(v_2u_3)=\alpha=3$.

\begin{figure}[h]
	\centering
	\subfigure[$y=1$]{
		\includegraphics[width=0.45\textwidth]{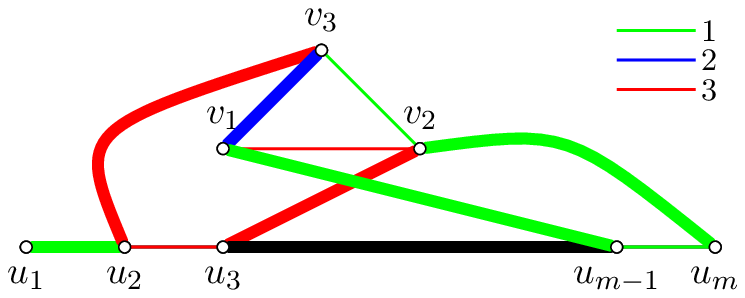}}
	\hskip 30pt
	\subfigure[$y=3$ and $col(u_2u_{m-1})=3$]{
		\includegraphics[width=0.45\textwidth]{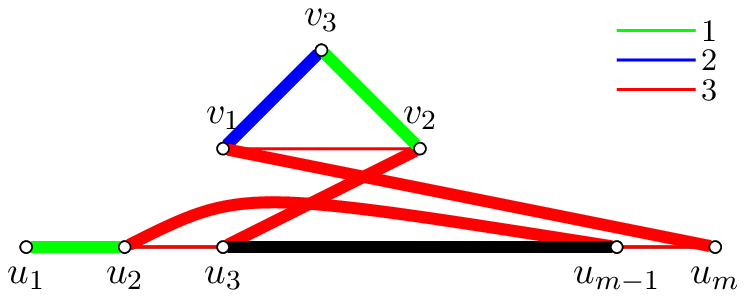}}
	\vskip 10pt
	\subfigure[$y=3$, $col(u_2u_{m-1})\neq 3$ and $\beta=1$]{
		\includegraphics[width=0.45\textwidth]{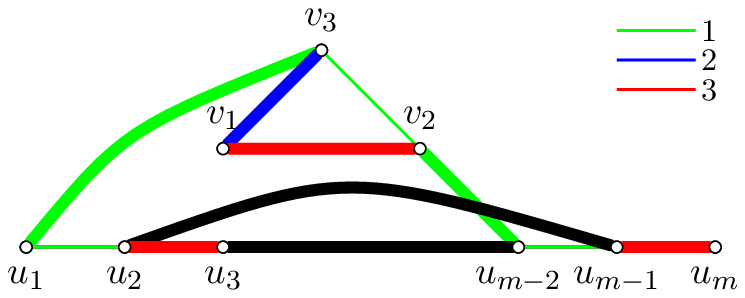}}
	\hskip 30pt
	\subfigure[$y=3$, $col(u_2u_{m-1})\neq 3$ and $\beta=2$]{
		\includegraphics[width=0.45\textwidth]{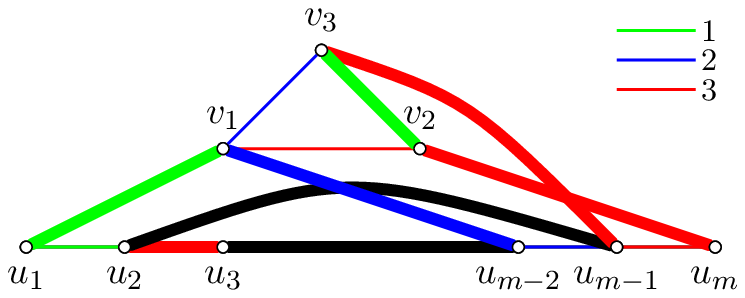}}
	\caption{Four cases when $y\in \{x,\alpha\}$ \label{fig:n3Q1}}
\end{figure}
Let $col(u_{m-2}u_{m-1})=\beta$.
By a similar argument, we can assume $F_-(u_{m-1})=\{v_i\}$ and 
$F_-(u_{m-2})=\{v_j\}$.  Then $\{y,\beta\}\in \{1,2,3\}$, $i\neq j$, $y=i$
and $ \{i,j,\beta\}=\{1,2,3\}$.	

Note that there is exactly one edge in $\{v_iu_3:i=1,2,3\}$ of color $\alpha$,
and the other two edges are of color $col(u_3u_4)$. By the symmetry between $(u_2,x)$
and $(u_{m-1},y)$, there is only one edge in $\{v_iu_{m-1}:i=1,2,3\}$ of color
$y$, and the other two edges are of color $\beta$. This implies that
$u_3\neq u_{m-1}$. Thus $m\geq 5$.

If $y\in \{x,\alpha\}$, then define a PC path as follows (see Figure \ref{fig:n3Q1}).\[
Q=\begin{cases}
	u_1u_2v_3v_1u_{m-1}Pu_3v_2u_m, &\text{if $y=1$;}\cr
	u_1u_2u_{m-1}u_{m-2}Pu_3v_2v_3v_1u_m, &\text{if $y=3$ and $col(u_2u_{m-1})=3$.}\cr
	u_1v_3v_1v_2u_{m-2}Pu_2u_{m-1}u_jm, &\text{if $y=3$, $col(u_2u_{m-1})\neq 3$ and $\beta=1$.}\cr
	u_1v_1u_{m-2}Pu_2u_{m-1}v_3v_2u_m, &\text{if $y=3$, $col(u_2u_{m-1})\neq 3$ and $\beta=2$.}\cr
\end{cases}\]
In all cases above, $Q$ is a PC path on $V(C)\cup V(P)$ with $ch(Q)=ch(P)$, 
a contradiction. Hence, $y\not\in \{x,\alpha\}$ and similarly 
$x\not\in \{y,\beta\}$. So $y=i=2$, $\beta=3$, $j=1$ and $m\geq 6$. 
If $col(u_3u_{m-1})=3$, then $u_1v_2u_{m-1}u_3Pu_{m-2}v_1u_2v_3u_m$ 
is a desired PC path (see Figure \ref{fig:fig1a}), a contradiction.
\begin{figure}
	\centering
	\includegraphics[width=0.45\linewidth]{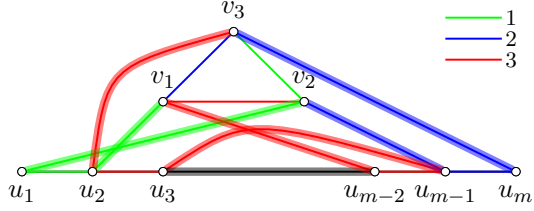}
	\caption{$col(u_3u_{m-1})=3$}
	\label{fig:fig1a}
\end{figure}

Now $col(u_3u_{m-1})\neq 3$. We will complete the proof by distinguishing $F^+(u_4)$. 
Define a PC path as following (see Figure \ref{fig:n3Q2}).
 \begin{figure}[h]
 	\centering
 	\subfigure[$v_1\in F^+(u_4)$ and $col(v_1u_4)=2$]{
 		\includegraphics[width=0.45\textwidth]{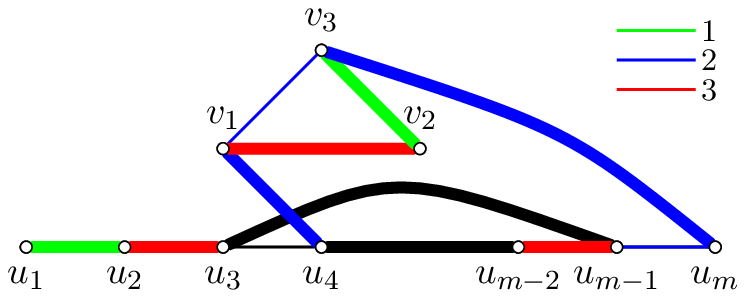}}
 	 	\hskip 30pt
 	\subfigure[$v_1\in F^+(u_4)$ and $col(v_1u_4)\neq 2$]{
 		\includegraphics[width=0.45\textwidth]{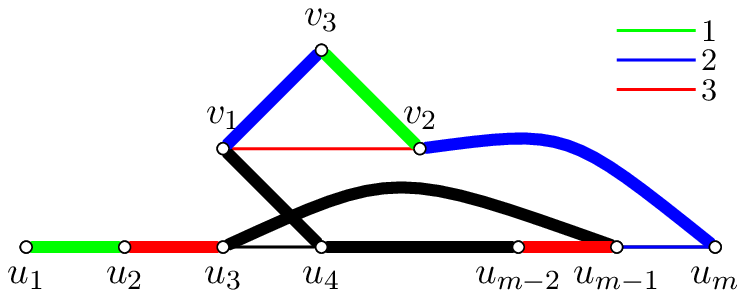}}
 	\vskip 8 pt
 	\subfigure[$v_3\in F^+(u_4)$ and $col(v_3u_4)=2$]{
 		\includegraphics[width=0.45\textwidth]{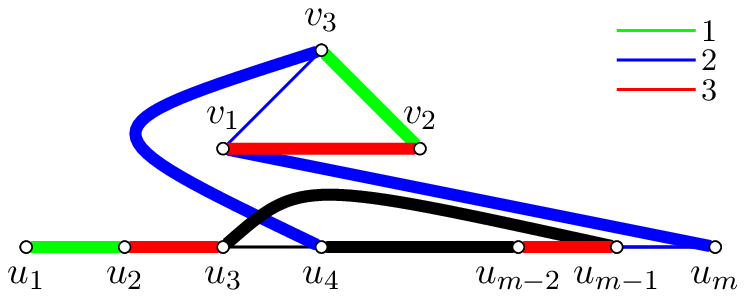}}
 	 	\hskip 30pt
 	\subfigure[$v_3\in F^+(u_4)$ and $col(v_3u_4)\neq 2$]{
 	 	\includegraphics[width=0.45\textwidth]{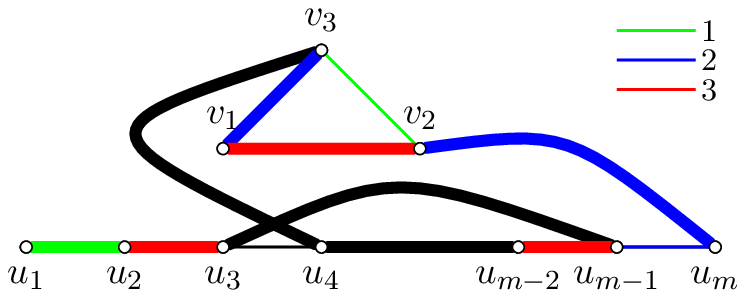}}
  	 	\vskip 8 pt
  	\subfigure[$v_2\in F^+(u_4)$ and $col(v_2u_4)=2$]{
  		\includegraphics[width=0.45\textwidth]{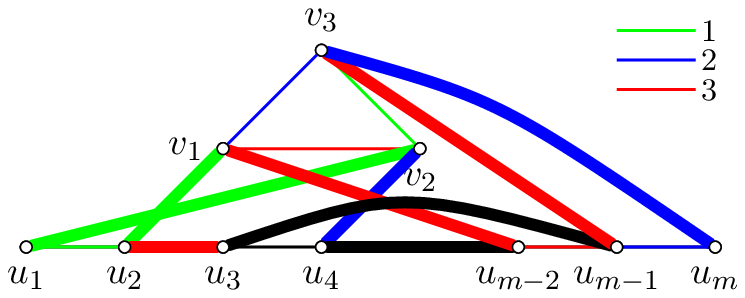}}
  	\hskip 30pt
  	\subfigure[$v_2\in F^+(u_4)$ and $col(v_2u_4)\neq 2$]{
  		\includegraphics[width=0.45\textwidth]{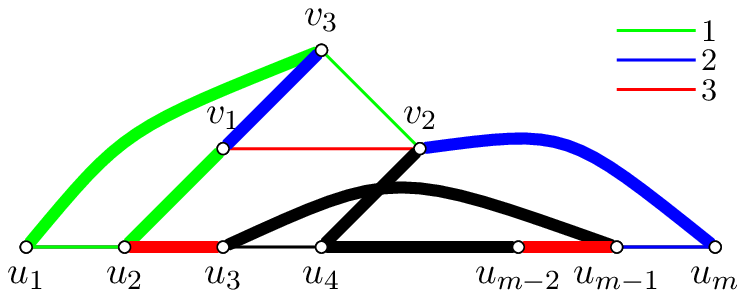}}
 	\caption{Six cases when $col(u_3u_{m-1})\neq 3$ \label{fig:n3Q2}}
 \end{figure}
\[Q=\begin{cases}
	u_1u_2u_3u_{m-1}Pu_4v_1v_2v_3u_m, &\text{if $v_1\in F^+(u_4)$ and $col(v_1u_4)=2$.}\cr
	u_1u_2u_3u_{m-1}Pu_4v_1v_3v_2u_m, &\text{if $v_1\in F^+(u_4)$ and $col(v_1u_4)\neq 2$.}\cr
	u_1u_2u_3u_{m-1}Pu_4v_3v_2v_1u_m, &\text{if $v_3\in F^+(u_4)$ and $col(v_3u_4)=2$.}\cr
	u_1u_2u_3u_{m-1}Pu_4v_3v_1v_2u_m, &\text{if $v_3\in F^+(u_4)$ and $col(v_3u_4)\neq 2$.}\cr
	u_1v_2u_4Pu_{m-2}v_1u_2u_3u_{m-1}v_3u_m, &\text{if $v_2\in F^+(u_4)$ and $col(v_2u_4)=2$.}\cr
	u_1v_3v_1u_2u_3u_{m-1}Pu_4v_2u_m, &\text{if $v_2\in F^+(u_4)$ and $col(v_2u_4)\neq 2$.}\cr
\end{cases}
\]	
In all cases, $Q$ is a PC Hamilton path of $G$ with $ch(Q)=ch(P)$.
\end{proof}

\noindent
{\bf Proof of Theorem \ref{Thm:Fengetal}.}
It suffices to prove for the case $k=1$. Assume $ch(P)=[u,x,y,u']$. If there
exists $v\in V(C_1)$ such that $col(vu)\neq x$ or $col(vu')=y$, then it
is easy to see the existence of a PC Hamilton path. The remaining
case is that $col(u,C_1)=\{x\}$ and $col(u',C_1)=\{y\}$. By Theorem \ref{Thm:key-1},
we also obtain a PC Hamilton path. The proof is complete. $\hfill\blacksquare$

\section{Proofs of Theorems \ref{Y:97} and \ref{Thm:key-2}}

\noindent
{\bf Proof of Theorem \ref{Thm:key-2}.}
If there are two parallel edges admitting distinct colors
in $G$, then a PC cycle of length 2 is obtained. Now assume
that $G$ is simple. We will prove Theorem \ref{Thm:key-2} by 
the way of contradiction.
Suppose that $G$ contains no PC cycle. We construct an auxiliary
digraph $D$ with $V(D)=V(G)$ and $A(D)=\{uv:$ {\it there are
two internally vertex-disjoint PC paths from $u$ to $v$ with
the distinct starting colors}$\}$. Then $\delta^+(D)\geq 1$.
So $D$ contains a directed cycle $v_1v_2\cdots v_kv_1$ with
$k\geq 2$. Let $P_i$ and $Q_i$ be the two internally vertex-disjoint
PC paths from $v_i$ to $v_{i+1}$ for $i=1,2,\ldots,k$ (indices
are taken modulo $k$). Since $P_i$ and $Q_i$ do not form a PC
cycle and the starting colors are distinct, the two paths must
have a same ending color, say $c_i$. To be specific, we further assume that
the starting color of $P_i$ is always distinct to $c_{i-1}$. Let
$C_i$ be the cycle formed by $P_i\cup Q_i$. Let $v_iP_iv_{i+1}Q_iv_i$
be the positive direction for each cycle $C_i$. For $u,w \in V(C_i)$,
we use $uC_i^+w$ and $uC_i^-w$ to denote the segments on $C_i$ from
$u$ to $w$ along the positive direction and the inverse direction,
respectively.

Since $P_1P_2\cdots P_k$ is a closed walk with consecutive
edges in distinct colors but not a PC cycle, there must exist
a pair of indices $(i,j)$ such that
$(V(C_{i})\setminus \{v_{i+1}\})\cap V(P_{j})\neq \emptyset$
for some $i,j\in [1,k]$ with $j\neq i$ and $|\bigcup_{t=i}^{j} V(P_t)|$ is
smallest, where the indices are taken modulo $k$.

Let $w$ be a vertex in $(V(C_{i})\setminus \{v_{i+1}\})\cap V(P_{j})$
closest to $v_j$ on $P_j$. We can see that either
$v_{i+1}P_{i+1}v_{i+2}\cdots v_jP_jwC^+_iv_{i+1}$
or $v_{i+1}P_{i+1}v_{i+2}\cdots v_jP_jwC^-_iv_{i+1}$ is a PC cycle,
a contradiction. This completes the proof of Theorem \ref{Thm:key-2}.

\begin{definition}[A closure for the existence of PC cycles]\label{def:1}
	Let $G$ be an edge-colored graph with no PC cycle. If joining
	two nonadjacent vertices $u$ and $v$ in $G$ by an edge $e$ of
	a certain color will not produce any PC cycle in $G+e$, then
	replace $G$ by $G+e$. Repeat this process until no more edges
	can be added. Then we say the resulting edge-colored graph is a {\it closure} of $G$.	
\end{definition}

\begin{proof}[\bf Proof of Theorem \ref{Y:97}]
Without loss of generality, assume that $G$ contains no loops
or multiple edges. Let $H$ be a closure of $G$ and let
$N^i_v=\{u\in N_H(v): col_H(uv)=i\}$.

If two vertices $u$ and $v$ are not adjacent in  $H$, then
by Definition \ref{def:1}, no matter what color we assign to
$uv$, the obtained edge-colored graph $H+uv$ would contain a
PC cycle. Thus there are two PC paths $P$ and $Q$ from $u$ to
$v$ with distinct starting colors. Let $u'$ be the vertex in
$V(P)\cap V(Q)$ closest to $u$ on $P$. Then $uPu'$ and $uQu'$
are two internally vertex-disjoint PC paths from $u$ to $u'$
with distinct starting colors in $H$.

Since $H$ contains no PC cycle, by Theorem \ref{Thm:key-2} and the above
analysis, there is a vertex $z\in V(H)$ such that $z$ is adjacent
to all the other vertices in $H$, and for each vertex
$w\in V(H)\setminus\{z\}$ and each PC path $P$ from $z$ to $w$,
the starting color must be the same, namely $col_H(zw)$. This
implies that there is no edge between $N^i_z$ and $N^j_z$ for
any distinct colors $i$ and $j$. Thus each component of $H-z$
is joint to $z$ with only one color. Recall that $G$ is a subgraph
of $H$. So each component of $G-z$ is joint to $z$ with at most one color.
The proof is complete.
\end{proof}
\section{Concluding remarks}
As an analogue to Theorem \ref{Thm:Fengetal}, Cheng, Kano and Wang \cite{CKW:20}  conjectured the following.
\begin{conjecture}[Cheng et al. \cite{CKW:20}]
Each edge-colored complete balanced bipartite graph $K_{n,n}$ contains a PC Hamilton path if it contains a PC 1-path-cycle factor. 
\end{conjecture}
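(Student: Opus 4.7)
The plan is to mirror the blueprint of the short proof of Theorem \ref{Thm:Fengetal} above, carefully adapted to the balanced bipartite setting with partite sets $V_1,V_2$. First I would reduce to $k=1$: a PC 1-path-cycle factor consisting of one PC path $P=u_1u_2\cdots u_m$ and one PC cycle $C$. A greedy check shows that unless the endpoints of $P$ have severely restricted color-neighborhoods in $C$, one can insert $C$ directly into $P$ at a chord of the appropriate color and finish. Writing $ch(P)=[u_1,x,y,u_m]$, the hard configuration is $col(u_1,V(C)\cap V(u_m))=\{x\}$ and $col(u_m,V(C)\cap V(u_1))=\{y\}$, where $V(u_i)$ denotes the partite side opposite to $u_i$. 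A parity count using $|V(P)|+|V(C)|=2n$ together with $|V(C)|$ even forces $u_1$ and $u_m$ to lie in distinct partite sets, so these two conditions are nontrivially constrained by bipartiteness.

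Second, I would formulate and prove the bipartite analog of Theorem \ref{Thm:key-1}: for such a pair $(P,C)$ in an edge-colored $K_{n,n}$, there is a PC Hamilton path $Q$ with $ch(Q)=ch(P)$. The engine would be bipartite versions of Lemma \ref{lem:AB} and Lemma \ref{lem:edge}. For the analog of Lemma \ref{lem:AB}, one takes $P=u_1u_2u_3$ and an edge $vv'\in E(C)$ with $v$ in the same partite class as $u_2$ and able to replace $u_2$ on $P$; completeness of $K_{n,n}$ across parts guarantees $u_2v'$ exists, so the five-subcase analysis on $col(u_2v')$ and $col(vv')$ transfers essentially verbatim. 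The analog of Lemma \ref{lem:edge} then follows using bipartite-restricted versions of $t(v)$ and $s(v)$, where the scanned indices $i$ are only those with $u_i$ opposite to $v$.

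Third, I would run an induction on $2|C|+|P|$ following the outline of the proof of Theorem \ref{Thm:key-1}. The analogs of Claims \ref{cl:n-2}, \ref{cl:2a}, \ref{cl:tauu2}, and \ref{cl:v2} should go through once one restricts the sets $F_+(u_i),F_-(u_i)$ to those vertices of $C$ lying opposite to $u_i$, and Claim \ref{cl:3x} acquires a parity correction (the spacing becomes $4$ or $6$ depending on sides). The induction bottoms out at $|C|=4$, the smallest admissible PC cycle length in the bipartite setting, rather than at $|C|=3$ as in the complete-graph case.

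The main obstacle is exactly this base case $|C|=4$. In the complete-graph proof, the final $n=3$ analysis exploited the freedom to use any chord and any triangle, and it still required a delicate enumeration of $F_\pm(u_2),F_\pm(u_{m-1})$ and of the colors on $C$. In the bipartite setting the number of usable chords between $C$ and $P$ is roughly halved, and the shortest monochromatic "bad" configurations are 4-cycles rather than triangles; consequently, the short-cut steps that collapsed cases via monochromatic triangles (for example the argument that $v$ replaces $u_{t(v)-1}$) must be replaced by longer alternating-4-cycle arguments, and the pairings between endpoints of $C$ and interior vertices of $P$ must respect the bipartition. I expect the $|C|=4$ base case to require a substantially longer but still finite case split, organized by the placement of $F_\pm(u_2)$ and $F_\pm(u_{m-1})$ within the two sides of $V(C)$ and by the color pattern on the four edges of $C$, and I suspect it is here — not in the induction step — that a fully rigorous proof of the conjecture will demand the most new ideas.
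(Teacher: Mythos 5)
This statement is an open conjecture, not a theorem of the paper: the authors state it in the concluding remarks precisely because it is unproved, and they explicitly observe that while Lemma \ref{lem:AB} survives in the bipartite setting, ``Lemma \ref{lem:edge} is not applicable to edge-colored $K_{n,n}$ in the present form.'' Your proposal is therefore not being measured against a proof in the paper, and, more importantly, it is not itself a proof. It is a plan with two genuine gaps. First, you assert that the bipartite analog of Lemma \ref{lem:edge} ``then follows'' from bipartite-restricted versions of $t(v)$ and $s(v)$, but this is exactly the step the authors flag as broken. The proof of Lemma \ref{lem:edge} runs both $v$ and $v'$ through the same scanning argument against $u_2,\ldots,u_{m-1}$, uses the edge $u_1v'$ and the edge $vu_2$ simultaneously, and collapses cases via the monochromatic triangle $wu_{t(w)}u_{t(w)-1}w$; in $K_{n,n}$ the vertices $v$ and $v'$ lie in opposite parts, so only one of them sees $u_2$, only alternate indices are scannable for each, and the triangle $wu_{t(w)}u_{t(w)-1}w$ cannot exist at all when $u_{t(w)}$ is opposite to $w$ (then $u_{t(w)-1}$ is on $w$'s own side). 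Saying these steps ``must be replaced by longer alternating-4-cycle arguments'' names the obstruction without overcoming it. The same issue propagates into Claims \ref{cl:2a} and \ref{cl:3x} and the $n=4$ analysis, all of which freely use chords $v_iv_{j+1}$ of $C$ and the recoloring trick on them; in the bipartite setting only chords joining opposite sides exist, and whether the surviving chords suffice is not addressed.

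Second, by your own admission the base case $|C|=4$ --- which you correctly identify as the analogue of the paper's $n=3$ endgame and the place where all the delicate enumeration happens --- is left entirely open, with the remark that it ``will demand the most new ideas.'' A proof outline that defers its hardest case to future work is a research program, not a proof. Your structural observations (the parity argument forcing $u_1$ and $u_m$ into distinct parts, the survival of Lemma \ref{lem:AB} because $v'$ automatically lies opposite $u_2$ when $v$ can replace $u_2$) are sound and consistent with the authors' own remarks, so the plan is a reasonable starting point; but as it stands the conjecture remains unproved by this proposal.
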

Note that Lemma \ref{lem:AB} still holds when the host edge-colored graph is complete bipartite. However, Lemma \ref{lem:edge} is not applicable to edge-colored $K_{n,n}$ in the present form.

Let $C_1$ and $C_2$ be vertex-disjoint PC cycles in an edge-colored $K_n$.
Looking at Theorem \ref{Thm:Fengetal}, a natural question would be whether
$C_1$ and $C_2$ can be emerged into one PC cycle. Assume that there exists
a coloring $f: V(C_1)\rightarrow col(C_1)$ such that
$col(uv)=f(u)$ or $f(v)$ for all distinct vertices $u,v\in V(C_1)$ and $col(uw)=f(u)$
for all $u\in V(C_1)$ and $w\in V(C_2)$. It is easy to check that there is no
PC cycle $C$ with $V(C)=V(C_1)\cup V(C_2)$.
Such a vertex set $V(C_1)$ was called ``degenerate set'' by the first author when
studying the relationship between edge-colored graphs and digraph
(see \cite{LLZ:20,LiR:21}). B\'{a}nkfalvi and B\'{a}nkfalvi \cite{Bankfalvi:1968}
gave a necessary and sufficient
condition for the existence of a PC Hamilton cycle in a $2$-colored $K_n$. Their
proof implies that the above construction is the only barrier for emerging $C_1$
and $C_2$ in a $2$-colored $K_n$. For cases with at least $3$ colors, the following
small example containing no PC cycle on $V(C_1)\cup V(C_2)$ implies the existence
of other kinds of barriers.
\begin{construction}
$C_1=x_1x_2x_3x_1,C_2=y_1y_2y_3y_1$; $col(x_ix_j)=i, col(y_iy_j)=i
\text{~and~} col(x_iy_j)=i  \text{~for all~} i,j\in \{1,2,3\}$;
$\text{recolor the edge $x_2y_3$ by color $1$}.$
\end{construction}

An interesting problem might be stated as following.
\begin{problem}\label{prob:C1C2}
Let $C_1$ and $C_2$ be vertex-disjoint PC cycles in a $k$-colored
$K_n$($k\geq 3$). Characterize the coloring structure when there is no
PC cycle $C$ satisfying $V(C)=V(C_1)\cup V(C_2)$.
\end{problem}
The characterization given by B\'{a}nkfalvi and B\'{a}nkfalvi
\cite{Bankfalvi:1968} implies a polynomial
algorithm to check whether a $2$-colored $K_n$ contains a PC Hamilton
cycle. In general, Gutin and Kim \cite{Gutin-Kim:2009} made the following conjecture. Solving Problem \ref{prob:C1C2} would be a big step to studying Conjecture \ref{con:Gutin-Kim}.
\begin{conjecture}[Gutin and Kim \cite{Gutin-Kim:2009}]\label{con:Gutin-Kim}
It is polynomial time checkable whether an edge-colored complete graph contains a PC Hamilton cycle.
\end{conjecture}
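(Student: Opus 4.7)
The plan is to approach Conjecture \ref{con:Gutin-Kim} by a two-stage reduction followed by iterative cycle merging, tying the polynomial-time decidability of PC Hamiltonicity to a resolution of Problem \ref{prob:C1C2}.

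First, I would show that checking whether an edge-colored $K_n$ admits a PC cycle factor is polynomial. The standard device is to build an auxiliary bipartite graph $B$ in which each vertex $v$ is split into an out-copy and an in-copy, further refined into colour classes so that a perfect matching in $B$ forces the two edges meeting at each vertex to have distinct colours; this is a routine variant of the construction used by Bang-Jensen and Gutin for PC cycle factors. A perfect matching in $B$ exists iff a PC cycle factor does, and matchings are polynomial-time checkable. If no PC cycle factor exists, then no PC Hamilton cycle exists either, and we output NO.

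Second, given a PC cycle factor $\{C_1,\dots,C_k\}$, the remaining task is to fuse the cycles into one. I would iterate as follows: if $k=1$ we are done; otherwise, for each pair $(C_i,C_j)$ invoke a subroutine that decides whether these two PC cycles can be combined into a single PC cycle on $V(C_i)\cup V(C_j)$ --- precisely Problem \ref{prob:C1C2}. If some pair can be merged, replace $C_i$ and $C_j$ in the factor by the merged cycle and repeat with $k-1$ cycles; if no pair can be merged, I would use the structural certificate produced by the subroutine to argue that the colouring obstruction extends to the whole vertex set, certifying that no PC Hamilton cycle on $V(K_n)$ exists.

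The main obstacle, as the authors already emphasise, is Problem \ref{prob:C1C2} itself. The B\'{a}nkfalvi--B\'{a}nkfalvi theorem settles the two-colour case by identifying ``degenerate sets'' as the unique barrier, but the Construction above shows that genuinely new obstructions appear already with three colours. I would attempt to classify these barriers as small local modifications of a monochromatic-by-vertex colouring pattern --- specifically, patterns encodable by a bounded amount of data per vertex (the colour of the incident star plus a constant number of ``exceptional'' recolourings, as in the paper's Construction) --- and thus polynomially detectable by a direct scan. A subtler point is proving that non-mergeability of \emph{every} pair implies non-Hamiltonicity: one must rule out PC Hamilton cycles that arise only by simultaneously reorganising three or more cycles of the factor. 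I would address this by showing that the pairwise obstructions glue into a global invariant preserved under all such multi-cycle rotations, so that its persistence in the factor certifies non-Hamiltonicity. Establishing this inductive gluing step, and with it the full polynomial-time procedure, is where the real difficulty lies.
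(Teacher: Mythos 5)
This statement is an open conjecture: the paper does not prove it, and only remarks that a solution to Problem \ref{prob:C1C2} would be ``a big step'' toward it. Your proposal is therefore not comparable to any proof in the paper, and, more importantly, it is not a proof --- it is a research programme whose two central steps are left unestablished. Concretely: (i) your merging subroutine requires a polynomial-time characterization of when two vertex-disjoint PC cycles can be fused, which is exactly Problem \ref{prob:C1C2}; the paper's Construction shows that with three or more colors the B\'{a}nkfalvi--B\'{a}nkfalvi ``degenerate set'' barrier is not the only obstruction, and you offer only the intention to ``attempt to classify'' the new barriers, not a classification. (ii) Even granting (i), the reduction is logically incomplete: a PC Hamilton cycle in $K_n$ need not arise by pairwise fusion of the cycles of any particular PC cycle factor, so pairwise non-mergeability of one factor does not certify non-Hamiltonicity. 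You name this issue yourself (``simultaneously reorganising three or more cycles'') but the proposed fix --- a ``global invariant preserved under all multi-cycle rotations'' --- is not defined, let alone shown to exist. A further unaddressed point is that the algorithm's correctness would have to be independent of which PC cycle factor the matching subroutine returns, or else you must search over factors, which is not obviously polynomial.

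The one part of your outline that is sound is the first stage: deciding the existence of a PC cycle factor reduces to a perfect matching computation and is polynomial, and its absence correctly forces a NO answer. Everything after that is precisely the open content of the conjecture. If you wish to contribute here, the concrete target the paper points to is Problem \ref{prob:C1C2}: even a complete structural characterization for $k=3$ colors of the pairs $(C_1,C_2)$ admitting no PC cycle on $V(C_1)\cup V(C_2)$ would be a publishable step, and only after that would the gluing question in (ii) become approachable.
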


Recently, Galeana-S\'{a}nchez, Rojas-Monroy, S\'{a}nchez-L\'{o}pez and Imelda Villarreal-Vald\'{e}s \cite{GRSI:22} generalized Yeo's Theorem to $H$-Cycles. Let $G$ be an edge-colored graph and $H$ be a loopless graph with $V(H)=col(G)$. A cycle $C$ in $G$ is called an $H$-cycle if for each pair of consecutive edges $e$ and $f$ in $C$, the colors $col(e)$ and $col(f)$ are adjacent in $H$. Particularly, a PC cycle in $G$ is an $H$-cycle with $H$ being a complete graph on $V(H)=col(G)$. By slightly modifying Theorem \ref{Thm:key-2} and the definition of closure, one can obtain a one-page proof of the main theorem in \cite{GRSI:22}.

\end{document}